\newtheorem{thm}{Theorem}[section]
\newtheorem{cor}[thm]{Corollary}
\newtheorem{lem}[thm]{Lemma}
\theoremstyle{definition}
\newtheorem{defn}[thm]{Definition}
\theoremstyle{remark}
\newcommand{\A}{\mathcal{A}}
\newcommand{\aut}{{\rm Aut}}
\newcommand{\B}{\vec{\mathcal B}}
\newcommand{\id}{\textsf{ID}_t^S}
\begin{document}

\title{Identifying Codes on Directed De Bruijn Graphs}%
\author{Debra Boutin \thanks{\texttt{dboutin@hamilton.edu}} \\ Department of Mathematics \\ Hamilton College \\ \\ Victoria Horan Goliber \thanks{\texttt{victoria.goliber@us.af.mil}} \\  Air Force Research Laboratory \\ Information Directorate \\ \\ Mikko Pelto \thanks{\texttt{mikko.pelto@utu.fi}, research supported by the Finnish Cultural Foundation} \\ Department of Mathematics and Statistics \\ University of Turku}% %\address{Air Force Research Laboratory}%
%\email{victoria.horan.1@us.af.mil}
%\subjclass{}%
%\keywords{}%

\date{\today}%
%\dedicatory{}%
%\commby{}%
% ----------------------------------------------------------------

\maketitle

\let\thefootnote\relax\footnote{Approved for public release; distribution unlimited:  88ABW-2015-3797.}

\begin{abstract}
For a directed graph $G$, a {\it $t$-identifying code} is a subset $S\subseteq V(G)$ with the property that for each vertex $v\in V(G)$ the set of vertices of $S$ reachable from $v$ by a directed path of length at most $t$ is both non-empty and unique. A graph is called {\it $t$-identifiable} if there exists a $t$-identifying code.  This paper shows that the de~Bruijn graph $\B(d,n)$ is $t$-identifiable if and only if $n \geq 2t-1$.  It is also shown that a $t$-identifying code for  $t$-identifiable de~Bruijn graphs must contain at least $d^{n-1}(d-1)$ vertices, and constructions are given to show that this lower bound is achievable $n \geq 2t$.  Further a (possibly) non-optimal construction is given when $n=2t-1$.  Additionally, with respect to $\vec{\mathcal{B}}(d,n)$ we provide upper and lower bounds on the size of a minimum \textit{$t$-dominating set} (a subset with the property that every vertex is at distance at most $t$ from the subset), that the minimum size of a \textit{directed resolving set} (a subset with the property that every vertex of the graph can be distinguished by its directed distances to vertices of $S$) is $d^{n-1}(d-1)$, and that if $d>n$ the minimum size of a {\it determining set} (a subset $S$ with the property that the only automorphism that fixes $S$ pointwise is the trivial automorphism) is $\left\lceil \frac{d-1}{n}\right\rceil$.

\end{abstract}

%\maketitle
% ----------------------------------------------------------------
\section{Introduction}

First introduced in 1998 \cite{original}, an {\it identifying code} for an undirected graph $G$ is a subset  $S\subseteq V(G)$  with the property that for each $v\in V(G)$ the subset of vertices of $S$ that are adjacent to $v$ is non-empty and unique.  That is, each vertex of the graph is uniquely identifiable by the non-empty subset of vertices of $S$ to which it is adjacent.  Note that not all graphs have an identifying code; those that do are called {\it identifiable}.  A graph fails to be identifiable if and only if it contains a pair of  vertices with the same closed neighborhood; such vertices are called {\it twins}. Extending these definitions to $t$-identifying and $t$-twins in which we consider $t$-neighborhoods is easy and is covered in Section \ref{IDSet}. Identifying codes can be quite useful in applications. For example, an identifying code in a network of smoke detectors allows us to determine the exact location of a fire given only the set of detectors that have been triggered.  However, the problem of finding identifying codes is  NP-Hard \cite{NPHard}.  The computational cost has so far limited the real-world use of identifying codes.\medskip

The directed de~Bruijn graph $\B(d,n)$ is a directed graph in which the vertices  are strings of length $n$ from an alphabet $\A$ with $d$ letters, and with a directed arc from vertex $x_1x_2\ldots x_n$ to vertex $x_2\ldots x_n a$ for each $a\in \A$.  When looking for a graph model for applications, it is useful to choose a graph with relatively few edges, but many short paths between any pair of vertices \cite{Baker}.  The de~Bruijn graphs have both of these desirable properties. In addition, given an arbitrary pair of vertices in a de~Bruijn graph, there are routing algorithms that, with high probability, create a path of length $O(\log n)$ between the pair \cite{Richa}.  The properties of de~Bruijn graphs enable some problems that are NP-complete on general graphs, such as the Hamilton cycle problem, to be computationally solvable on de~Bruijn graphs \cite{Skiena}.  We will consider identifying codes on directed graphs, and see that for most directed de~Bruijn graphs, the construction of minimum $t$-identifying codes is indeed solvable (when they exist).\medskip

Other vertex subsets that we consider for de~Bruijn graphs in this paper are dominating sets, resolving sets, and determining sets.  Dominating sets provide complete coverage for a graph, while resolving sets provide an identification of vertices in graphs using relative distances. Finally, determining sets provide a set of vertices that is only fixed pointwise by the trivial automorphism.  These types of subsets are also useful in applications.  For example, resolving sets have been used in aiding the navigation of robots when distances to sufficient landmarks are known \cite{Robots}, and determining sets are useful in graph distinguishing which can reduce graph symmetry to enhance recognition.  These different vertex subsets are interrelated, as shown in Figure \ref{relatives}.  For example, each resolving set and identifying code is also a determining set, but not vice-versa.  This is discussed more fully in Section \ref{Dominating} and  Section \ref{Determining}.\medskip

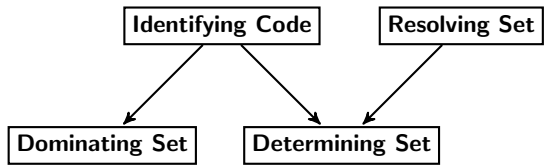
\begin{figure}
\begin{center}

\begin{tikzpicture}[->,>=stealth',shorten >=1pt,auto,node distance=3cm,
  thick,main node/.style={rectangle,draw,font=\sffamily\bfseries,scale=0.75},new node/.style={circle,fill=black,text=white,draw,font=\sffamily\bfseries,scale=0.75}]

  \node[main node] (0)                      {Identifying Code};
  \node[main node] (1) [below left of=0]    {Dominating Set};
  \node[main node] (2) [below right of=0]   {Determining Set};
  \node[main node] (3) [above right of=2]   {Resolving Set};

  \path[every node/.style={font=\sffamily\small}]
    (0) edge node [left]      {} (1)
        edge node [left]      {} (2)
    (3) edge node [left]      {} (2);

\end{tikzpicture}

\end{center}
\caption{Vertex Subset Relationships}\label{relatives}
\end{figure}

In Section \ref{definitions}, we give careful definitions necessary for working with directed de~Bruijn graphs.
In Section  \ref{IDSet}, % we construct 1 and 2-identifying codes for $\B(d,n)$ and
we  prove that for every $t$-identifiable de~Bruijn graph, any $t$-identifying code must contain at least $d^{n-1}(d-1)$ vertices.  We prove by construction that this bound is achievable for $t$-identifying codes when $n \geq 2t$.  For  $n=2t-1$ we show the existence of a $t$-identifying code of size $d^{n-1}(d-1)+d^t$, which may or may not be optimal size.  Furthermore, when $n \leq 2t-2$, we are able to show that $\vec{\mathcal{B}}(d,n)$ has no $t$-identifying code due to the existence of \textit{$t$-twins}.  In Section \ref{OtherSet} we study dominating sets, directed resolving sets, and determining sets for $\B(d,n)$.  Section \ref{Resolving} gives a proof that  the minimum size of a directed resolving set for $\B(d,n)$ is $d^{n-1}(d-1)$, and Section \ref{Determining} that the minimum size of a determining set is $\left\lceil \frac{d-1}{n}\right\rceil$.\medskip

\section{Definitions}\label{definitions}

The bulk of our results depend on structures based on strings, and we require quite a few definitions and particular notations.  Let {\boldmath $\A_d$} be the set of integers modulo $d$ and let $\A_d^n$ be the set of all strings of length $n$ over $\A_d$.  When $d$ is clear from context we will use $\A$ and $\A^n$ respectively.  

Within these strings, repeated characters or strings may be denoted by an exponent.  For example, $$ z^i = \underbrace{zz \cdots z}_{i \text{ times}}$$ and $$(01)^k = \underbrace{(01)(01) \cdots (01)}_{k \text{ times}}.$$  Throughout this paper we also use the common notation $[d]=\{1, 2, \ldots , d\}$, as well as $[i,j]=\{i, i+1, \ldots , j-1, j\}$. \medskip

\begin{defn}
    The \textbf{concatenation of two strings} $v=v_1v_2 \ldots v_i$ and $w=w_1w_2 \ldots w_k$ is given by $v \oplus w = v_1v_2 \ldots v_iw_1w_2 \ldots w_k.$

    The \textbf{concatenation of sets of strings} $S$ and $T$ is given by $S \oplus T = \{v \oplus w \mid v \in S \hbox{ and } w \in T\}.$
\end{defn}

\begin{defn}
    When discussing substrings of a string $w=w_1w_2 \ldots w_n$, we will use the notation $w(a:b)$ to denote the substring $w_aw_{a+1} \ldots w_b$.  Two special substrings are the \textbf{prefix}, $w(1:n-1)$, and the \textbf{suffix}, $w(2:n)$.
\end{defn}

\begin{defn}
	Let $w = w_1w_2 \ldots w_n \in \A_d^n$.  Define the string operation $$w^{(i,a)} = w_1w_2 \ldots w_{i-1} a w_{i+1} \ldots w_n.$$  This operation simply replaces the $i$th letter in $w$ with $a$.
\end{defn}

\begin{defn}
    Let $w=w_1 \ldots w_n \in \A_d^n$ and $\ell \in \mathbb{Z}^+$ such that $n \geq 2 \ell$.  Then we say that $w$ has \textbf{period length $\ell$} if $w_i = w_{i+\ell}$ for all $i \in [n-\ell]$.  
    
    If we have $n < 2 \ell$, then we say that $w$ has \textbf{almost period length $\ell$}.
\end{defn}

\begin{defn}
    Let $w \in \A_d^n$, and suppose that $w$ has period length $\ell$, and does not have period length $k$ for any $k < \ell$.  Then $w$ is called \textbf{$\ell$-periodic}.

If there exist some $\ell > \frac{n}{2}$ and word $w' \in \A_d^{2\ell -n}$ such that $w \oplus w'$ is $\ell$-periodic, then $w$ is called \textbf{almost $\ell$-periodic}.
\end{defn}

We now provide some lemmas regarding string properties that will be used later in the paper.

\begin{lem}\label{MPL1}
    \emph{\cite{FineWilf}}  Let $\ell_1 > \ell_2$ and $w$ be a word of length $n \geq \ell_1+\ell_2 - \hbox{gcd}(\ell_1, \ell_2)$.  If $w$ has periods (or almost periods) of lengths $\ell_1$ and $\ell_2$ , then $w$ has a period of length gcd$(\ell_1, \ell_2)$.
\end{lem}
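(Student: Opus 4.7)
The result is the classical Fine--Wilf theorem, and I would follow the standard inductive proof based on the Euclidean algorithm applied to the pair $(\ell_1,\ell_2)$. First I would fold the ``almost period'' case into the ``period'' case uniformly: in either definition, the only identity we ever invoke is $w_i = w_{i+\ell}$ whenever both $i$ and $i+\ell$ lie in $[1,n]$, and both notions guarantee exactly this, so I do not need to treat them separately.

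I would then induct on $\ell_1 + \ell_2$. The base case $\ell_2 = d$ (so $d \mid \ell_1$) is immediate, since a period of length $\ell_2 = d$ is already the conclusion. For the inductive step, the key sub-claim is that $w$ also has period $\ell_1 - \ell_2$. For any index $i$ with $i \leq n - \ell_1$, this is easy: period $\ell_1$ gives $w_i = w_{i+\ell_1}$, and then period $\ell_2$ gives $w_{i+\ell_1} = w_{i + \ell_1 - \ell_2}$. For a boundary index $i$ with $n - \ell_1 < i \leq n - (\ell_1 - \ell_2)$, I would iterate period $\ell_2$ to shift both $i$ and $i + \ell_1 - \ell_2$ downward into the safe range where the first argument applies; the hypothesis $n \geq \ell_1 + \ell_2 - d$ is exactly what guarantees such shifts exist and stay inside $[1,n]$. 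Once $w$ is known to have period $\ell_1 - \ell_2$ alongside period $\ell_2$, the induction hypothesis applies with $\gcd(\ell_1 - \ell_2, \ell_2) = d$ and length requirement $n \geq (\ell_1 - \ell_2) + \ell_2 - d = \ell_1 - d$, which is weaker than our standing hypothesis, yielding that $w$ has period $d$.

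The main obstacle is the boundary bookkeeping in the sub-claim: checking that the chain of $\pm \ell_1, \pm \ell_2$ steps linking $w_i$ to $w_{i+\ell_1-\ell_2}$ can always be carried out without leaving $[1,n]$. A cleaner reformulation I might actually write up is to consider the graph on vertex set $[1,n]$ with edges $\{i, i+\ell_1\}$ and $\{i, i+\ell_2\}$ (whenever the endpoint lies in $[1,n]$); the two period hypotheses say that $w$ is constant on each connected component, and the lemma reduces to showing that these components are precisely the residue classes modulo $d$. The bound $n \geq \ell_1 + \ell_2 - d$ is classically tight for this connectivity assertion, which is why it is also tight in the lemma. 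Since this is exactly the content of Fine and Wilf's original paper, I would ultimately just cite \cite{FineWilf} for the full argument, as the authors already do.
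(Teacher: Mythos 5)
The paper offers no proof of this lemma at all---it is imported verbatim from the literature with only the citation to Fine and Wilf---and since you likewise conclude that the right move is to cite that reference, your approach coincides with the paper's. Your sketch of the classical Euclidean-algorithm induction (folding ``almost periods'' into the same identity $w_i = w_{i+\ell}$, deriving period $\ell_1-\ell_2$, handling boundary indices using $n \ge \ell_1+\ell_2-\gcd(\ell_1,\ell_2)$, and recursing on $\gcd(\ell_1-\ell_2,\ell_2)$) is a correct account of the standard argument.
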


\begin{lem}\label{MPL2}
    \emph{\cite{Berstel}}  Let $\ell_1 \geq \ell_2$ and $w$ be a word of length $n \geq \ell_1+\ell_2$.  If $w$ has a period (or almost period) of length $\ell_1$ and $w^{(k,m)}$ has a period of length $\ell_2$ for some $m \in \A_d$, then there is $m'\in \A_d$ such that $w^{(k,m')}$ has a period of length gcd$(\ell_1, \ell_2)$.
\end{lem}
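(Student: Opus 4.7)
The plan is to handle a couple of trivial cases first and then construct $m'$ explicitly by translating both periodicities into constraints on $w$ itself. If $m = 0$, then $w^{(k,0)} = w$ already has periods $\ell_1$ and $\ell_2$; since $n \geq \ell_1 + \ell_2 \geq \ell_1 + \ell_2 - \gcd(\ell_1,\ell_2)$, Lemma~\ref{MPL1} gives $w$ period $g := \gcd(\ell_1,\ell_2)$ and $m' = 0$ works. If $\ell_2 \mid \ell_1$, then $g = \ell_2$ and $m' = m$ suffices. So assume henceforth $m \neq 0$ and $g < \ell_2 < \ell_1$.

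Write $v := w^{(k,m)}$ and note that $v$ differs from $w$ only at position $k$, so the $\ell_2$-periodicity of $v$ translates into $w_i = w_{i+\ell_2}$ for every $i \in [1, n-\ell_2]$ with $i \neq k$ and $i + \ell_2 \neq k$, together with the defects $w_{k \pm \ell_2} = w_k + m$ (when in range). The central claim is that $w_i = w_{i+g}$ holds for every $i \in [1, n-g]$ with $i \neq k$ and $i + g \neq k$. To prove it, pick a B\'ezout representation $g = a\ell_1 - b\ell_2$ with $a, b > 0$ minimal (so $a < \ell_2/g$ and $b < \ell_1/g$); this produces a chain from $i$ to $i + g$ built from $a$ up-steps of $+\ell_1$ and $b$ down-steps of $-\ell_2$. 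Each $\ell_1$-step preserves the $w$-value unconditionally (from $w$'s own period $\ell_1$); each $\ell_2$-step preserves the $w$-value provided neither endpoint equals $k$. The hypothesis $n \geq \ell_1 + \ell_2$ affords enough slack so that an interleaved ordering of the steps keeps every intermediate position inside $[1, n]$.

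Having established the central claim, define $m' := w_{k+g} - w_k \pmod d$ when $k+g \leq n$, and $m' := w_{k-g} - w_k$ otherwise. When both $k-g$ and $k+g$ lie in $[1, n]$, one must verify the consistency $w_{k+g} = w_{k-g}$; this follows from a short bridging argument that hops from $k-g$ to a position well away from $k$ via a $+\ell_1$ step and then invokes the central claim to return to $k+g$. Finally, verifying that $u := w^{(k,m')}$ has period $g$ splits into three cases at each index $i$: when $\{i, i+g\} \cap \{k\} = \emptyset$ the central claim applies; when $i = k$ or $i+g = k$ the defining equation for $m'$ gives the required equality.

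The chief obstacle is routing the B\'ezout chain around the single defect position $k$ when a naive interleaving happens to pass through $k$. The remedy is to exploit the freedom provided by $w$'s unconditional $\ell_1$-periodicity: since $\ell_1$-moves are always legal, only $\ell_2$-moves need to dodge $k$, and a tail of the chain can be translated by $\pm \ell_1$ to achieve this whenever the slack $n - (\ell_1 + \ell_2) \geq 0$ allows. A finite case analysis on the position of $k$ relative to $[i, i+g]$, together with a symmetric argument using $+\ell_2$/$-\ell_1$ moves when $i$ is too large to admit a $+\ell_1$ step first, completes the proof.
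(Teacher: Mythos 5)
First, note that the paper itself gives no proof of Lemma~\ref{MPL2}: it is imported verbatim from \cite{Berstel} (it is the one-hole Fine--Wilf theorem of Berstel and Boasson, where the position $k$ plays the role of the hole), so there is no in-paper argument to compare yours against. Your outline follows the standard strategy for that theorem --- reduce to the nontrivial case, translate the two periodicities into ``local period $g=\gcd(\ell_1,\ell_2)$ away from $k$,'' then repair position $k$ by choosing $m'$ --- and the reductions, the definition of $m'$, and the final three-case verification are all sound in shape.

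The gap is that the two load-bearing steps are asserted rather than proved, and the specific arguments you offer for them do not always work. For the consistency check $w_{k-g}=w_{k+g}$ you propose a single $\pm\ell_1$ hop followed by the central claim; but take $\ell_1=7$, $\ell_2=3$, $g=1$, $n=10=\ell_1+\ell_2$, $k=5$: then $k-g+\ell_1=11>n$ and $k+g-\ell_1=-1<1$, so neither hop is in range. (The conclusion still holds, via the longer chain $w_4=w_7=w_{10}=w_3=w_6$ using two $\ell_2$-steps, an $\ell_1$-step, and another $\ell_2$-step, all avoiding $k$ --- but that is not the argument you gave.) The same issue afflicts the central claim: you assert that an interleaving of $a$ up-steps of $\ell_1$ and $b$ down-steps of $\ell_2$ can always be routed inside $[1,n]$ while every $\ell_2$-step dodges $k$, and that ``the slack $n-(\ell_1+\ell_2)\geq 0$ allows'' this. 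That routing is precisely the content of the theorem --- it is exactly where the sharp hypothesis $n\geq\ell_1+\ell_2$ (rather than the Fine--Wilf bound $\ell_1+\ell_2-g$) gets consumed, and when $n<2\ell_1$ (permitted here, whence the ``almost period'' caveat) the $\ell_1$-steps are themselves available from only $n-\ell_1<\ell_1$ starting positions, so they are not the free resource your sketch treats them as. The ``finite case analysis'' that would close these holes is the actual proof, and it is not present; as written, the proposal is a correct plan rather than a proof.
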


The next two lemmas provide methods for constructing non-periodic strings.

\begin{lem}\label{MPL3}
    Let $w \in \A_d^n$ such that $w$ is $\ell_1$-periodic or almost $\ell_1$-periodic.  Let $m \in [d-1]$ and also $k \in [n]$ with $k \leq n- \ell_1$ or $k > \ell_1$.   Then for any $\ell_2 < \frac{n}{2}$ with $\ell_1 \geq \ell_2$ and $n \geq \ell_1 + \ell_2$, it is not possible that $w^{(k,w_k+m)}$ is $\ell_2$-periodic.
\end{lem}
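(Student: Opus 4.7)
My plan is to argue by contradiction using Lemma~\ref{MPL2} to force a period of $w$ shorter than $\ell_1$. Assume $w^{(k,m)}$ is $\ell_2$-periodic. Since $w$ has (almost) period $\ell_1$, $w^{(k,m)}$ has period $\ell_2$, $\ell_1 \geq \ell_2$, and $n \geq \ell_1 + \ell_2$, Lemma~\ref{MPL2} yields some $m' \in \A_d$ such that $w^{(k,m')}$ has period $g := \gcd(\ell_1,\ell_2)$. Because $g \mid \ell_1$, iterating the short period shows $w^{(k,m')}_i = w^{(k,m')}_{i+\ell_1}$ for every $i \in [n-\ell_1]$; and whether $w$ is $\ell_1$-periodic or almost $\ell_1$-periodic, the same relation $w_i = w_{i+\ell_1}$ holds on that range.

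Next I would use the hypothesis on $k$ to pin down $m'$. The strings $w$ and $w^{(k,m')}$ differ only at position $k$, so the length-$\ell_1$ periodicity relations tie position $k$ to a position that is unchanged between $w$ and $w^{(k,m')}$. If $k \leq n-\ell_1$, evaluating both relations at $i=k$ gives $w_k + m' \equiv w_{k+\ell_1} = w_k \pmod d$, forcing $m' = 0$; if $k > \ell_1$, the analogous comparison at $i = k-\ell_1$ gives $w_k + m' = w_{k-\ell_1} = w_k$, again $m' = 0$. Thus $w = w^{(k,m')}$ itself has period $g$.

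To conclude, I would split on whether $\ell_2 < \ell_1$ or $\ell_2 = \ell_1$. If $\ell_2 < \ell_1$, then $g \leq \ell_2 < \ell_1$; in the true-period subcase this directly contradicts minimality of $\ell_1$ as a period of $w$. In the almost-period subcase ($\ell_1 > n/2$), $\ell_1$-periodicity of the length-$2\ell_1$ extension $w \oplus w'$ forces $w \oplus w' = u \oplus u$ where $u = w(1:\ell_1)$; since $u$ inherits period $g$ from $w$ and $g \mid \ell_1$, the doubled string $u \oplus u$ still has period $g$, again violating the minimality of $\ell_1$ as a period of $w \oplus w'$. If $\ell_2 = \ell_1$, then $n \geq 2\ell_1$ forces $w$ to be truly $\ell_1$-periodic, and I would apply the forcing argument from the previous paragraph directly to $m$ in place of $m'$ (since $w^{(k,m)}$ itself already has period $\ell_1 = \ell_2$), obtaining $m \equiv 0 \pmod d$ and contradicting $m \in [d-1]$. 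The subtle point I expect to wrestle with is the almost-period subcase, where one must verify that a short period of $w$ propagates to its canonical extension $w \oplus w'$; the identification $w \oplus w' = u \oplus u$ is the key observation that makes this clean.
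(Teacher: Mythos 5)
Your proof is correct, and it shares the paper's skeleton --- argue by contradiction and invoke Lemma~\ref{MPL2} to produce $w^{(k,m')}$ with period $g=\gcd(\ell_1,\ell_2)$ --- but the way you extract the contradiction is genuinely different. The paper never determines $m'$: it uses $w^{(k,m')}$ only to link two positions that are both distinct from $k$ (namely $k\mp\ell_1$ and $k\mp\ell_2$, which differ by a multiple of $g$ and are untouched by the modification), yielding the single chain $w_k = w_{k\mp\ell_1} = w_{k\mp\ell_2} = w_k^{(k,m)} = w_k+m$ and hence the immediate contradiction $m\equiv 0\pmod d$. In particular the paper's argument uses only that $w$ \emph{has} (almost) period $\ell_1$, never the minimality of $\ell_1$. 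You instead evaluate the period relations at position $k$ itself to force $m'=0$, conclude that $w$ would have period $g$, and then contradict the minimality of $\ell_1$; this is what obliges you to split off the almost-periodic subcase (with the $w\oplus w'=u\oplus u$ doubling argument, which is correct and does need the hypothesis $g\mid\ell_1$ exactly as you note) and the $\ell_2=\ell_1$ subcase separately. What your route buys is the explicit intermediate fact that $w$ itself would be $g$-periodic, plus a careful, fully justified treatment of the almost-period definition that the paper leaves implicit; what the paper's route buys is uniformity and brevity --- one three-term chain covers every subcase and sidesteps the extension $w\oplus w'$ entirely.
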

\begin{proof}
    We proceed by contradiction, and assume that $w^{(k,w_k+m)}$ is $\ell_2$-periodic.  We have two cases.  First, if $k > \ell_1$, then by Lemma \ref{MPL2}, there exists some $m' \in \A_d$ such that $w^{(k,w_k+m')}$ has period of length $\hbox{gcd}(\ell_1, \ell_2)$.  Then we have the following chain of equalities.
    $$\begin{array}{rcll}
        w_k & = & w_{k-\ell_1} & \hbox{since $w$ has a period of length $\ell_1$} \\
        & = & w_{k-\ell_2} & \hbox{since $w^{(k,w_k+m')}$ has a period of length $\hbox{gcd}(\ell_1, \ell_2)$} \\
        & = & w_k^{(k,w_k+m)} & \hbox{since $w^{(k,w_k+m)}$ has a period of length $\ell_2$}
    \end{array}$$
    Hence this is a contradiction.  For our second case, when $k \leq n- \ell_1$, we note the following. $$w_k = w_{k+\ell_1} = w_{k+\ell_2} = w_k^{(k,w_k+m)}$$  This is also a contradiction.  Therefore we must have that $w^{(k,w_k+m)}$ is not $\ell_2$-periodic.
\end{proof}

The following lemma leverages the previous one for an additional method of constructing non-periodic strings.

\begin{lem}\label{L2.16}
    Let $w \in \A_d^n$ such that $w$ has period length $\ell_1$ for some $\ell_1 < \frac{n}{2}$.  Suppose that $m \in [d-1]$,  $i,j,k \in [n]$ with $i \leq k \leq j$, and $\ell_2 \leq \ell_1$ with $j-i+1 \geq \ell_1 + \ell_2$.  If either $k \geq i+ \ell_1$ or $k \leq j-\ell_1$ then we must have that $w^{(k,w_k+m)}(i:j)$ does not have period $\ell_2$.
\end{lem}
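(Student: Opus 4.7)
The plan is to reduce the claim to Lemma~\ref{MPL3} applied to the substring $w' := w(i:j)$. Write $n' := j - i + 1$ and $k' := k - i + 1 \in [n']$. Directly from the definition of $w^{(k,m)}$, the substring $w^{(k,m)}(i,j)$ equals $(w')^{(k',m)}$, so it suffices to show that $(w')^{(k',m)}$ does not have period $\ell_2$.

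Next I would verify the translated hypotheses on $w'$. The relations $w_s = w_{s+\ell_1}$ restrict to $w'_s = w'_{s+\ell_1}$ whenever both indices lie in $[i,j]$, so $w'$ has period length $\ell_1$ (if $n' \geq 2\ell_1$) or almost period length $\ell_1$ (if $n' < 2\ell_1$). The hypothesis $n' = j-i+1 \geq \ell_1 + \ell_2$ supplies exactly the length bound required by Lemma~\ref{MPL2}, and $\ell_1 \geq \ell_2$ is given. Moreover, the two admitted ranges for $k$ translate cleanly under the shift $k \mapsto k' = k-i+1$: the condition $k \geq i + \ell_1$ becomes $k' > \ell_1$, and $k \leq j - \ell_1$ becomes $k' \leq n' - \ell_1$, which are precisely the two cases handled in the proof of Lemma~\ref{MPL3}.

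I would then apply the argument of Lemma~\ref{MPL3} to $w'$: assuming toward contradiction that $(w')^{(k',m)}$ has period $\ell_2$, Lemma~\ref{MPL2} produces some $m' \in \A_d$ such that $(w')^{(k',m')}$ has period $\gcd(\ell_1,\ell_2)$, and the chain
\[
w'_{k'} \;=\; w'_{k' \mp \ell_1} \;=\; w'_{k' \mp \ell_2} \;=\; (w')^{(k',m)}_{k'} \;=\; w'_{k'} + m \pmod{d}
\]
(with the sign chosen by the case $k' > \ell_1$ or $k' \leq n' - \ell_1$) forces $m \equiv 0 \pmod d$, contradicting $m \in [d-1]$.

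The one subtlety, and the main technical obstacle, is that Lemma~\ref{MPL3} is nominally stated under the assumption that $w'$ is $\ell_1$-periodic or almost $\ell_1$-periodic---i.e., that $\ell_1$ is the \emph{minimal} period of $w'$---whereas we only know $w'$ has \emph{some} period of length $\ell_1$ inherited from $w$, possibly with a strictly smaller primitive period. However, the proof of Lemma~\ref{MPL3} uses that hypothesis only to invoke Lemma~\ref{MPL2} and to run the equality chain above, neither of which requires primitivity. So the argument goes through unchanged; in the writeup it is probably cleanest simply to reproduce the short equality chain above rather than invoke Lemma~\ref{MPL3} as a black box.
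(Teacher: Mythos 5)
Your proposal is correct and follows essentially the same route as the paper, which likewise defines $w' = w(i,j)$ and reduces to Lemma~\ref{MPL3} (the paper writes the shifted index as $k-i$ rather than your more careful $k-i+1$, an inessential off-by-one). Your added remark about minimality of the period not actually being needed in Lemma~\ref{MPL3} is a fair point of care that the paper glosses over.
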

\begin{proof}
    Define $w' = w(i:j)$ and hence $w'^{(k-i,w_{k-i}+m)}=w^{(k,w_k+m)}(i:j)$, and then apply Lemma \ref{MPL3} to compare the two strings.
\end{proof}

Lastly, we provide a lemma for creating more periodic strings from existing ones.

\begin{lem}\label{L8}
    Let $n=2t$ and let $w \in \A_d^n$.  If $w$ has period length $t$, and if for some $\ell<t$ and $m \in \A_d$ we find that $w' = w^{(t,w_t+m)}(t+1-\ell : n-1)$ is $\ell$-periodic, then we must have that $\ell$ divides $t$ and $w' \oplus (w_n+m)$ has period $\ell$.
\end{lem}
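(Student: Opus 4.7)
The plan is to unpack what the hypothesis ``$u'$ is $\ell$-periodic'' says about $u$, apply Lemma \ref{MPL1} to force $\ell\mid t$, and then verify the periodicity of the extended word.

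First I would use $n=2t$ and the period $t$ of $u$. Writing $u=u^*\oplus u^*$ with $u^*=u(1:t)$, the word $u'=u^{(t,m)}(t+1-\ell:n-1)$ of length $t+\ell-1$ takes the explicit form
\[
u' \;=\; \bigl(u_{t+1-\ell},\,\ldots,\,u_{t-1},\; u_t+m,\; u_1,\,u_2,\,\ldots,\,u_{t-1}\bigr).
\]
Writing out each equation $u'_j=u'_{j+\ell}$ for $j\in[1,t-1]$ and splitting on whether the indices lie in the prefix ($j\le\ell-1$), the middle ($j=\ell$), or the suffix ($j\ge\ell+1$), I obtain three families of identities: (A)~$u_{t-\ell+j}=u_j$ for $j\in[1,\ell-1]$, which is an (almost) period of length $t-\ell$ on $u(1:t-1)$; (B)~the boundary identity $u_t+m=u_\ell$; and (C)~$u_{j+\ell}=u_j$ for $j\in[1,t-1-\ell]$, i.e.\ $u(1:t-1)$ has (almost) period $\ell$.

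The crux, and the main obstacle, is to deduce $\ell\mid t$. Applying Lemma \ref{MPL1} to $u(1:t-1)$ with the (almost) periods $\ell$ and $t-\ell$ coming from (C) and (A) would give that $u(1:t-1)$ has period $g:=\gcd(\ell,t-\ell)=\gcd(\ell,t)$. I would then argue by contradiction: if $g<\ell$, I claim $u'$ itself has period $g$, contradicting $\ell$-periodicity. Both the prefix $u'(1:\ell-1)=u(t+1-\ell:t-1)$ and the suffix $u'(\ell+1:t+\ell-1)=u(1:t-1)$ are sub-segments of $u(1:t-1)$, hence each inherits period $g$; the only non-routine check is compatibility at the middle position, where $u'_\ell=u_t+m=u_\ell$ by (B), and $u_\ell=u_g$ by period $g$ of $u(1:t-1)$ together with $g\mid\ell$. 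A short case analysis on $j$ would then confirm $u'_j=u'_{j+g}$ throughout, giving the contradiction and forcing $\ell\mid t$.

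For the second claim, the letter appended in $u'\oplus(u_n+m)$ is $u_n+m=u_t+m=u_\ell$, using period $t$ of $u$ together with (B). The only period-$\ell$ equation of $u'\oplus u_\ell$ not already contained in the period-$\ell$ structure of $u'$ is at position $j=t$, where I would need $u'_t=u_\ell$. Since $t\ge\ell+1$ one has $u'_t=u_{t-\ell}$, and using $\ell\mid t$ together with the period $\ell$ of $u(1:t-1)$, the identity $u_{t-\ell}=u_\ell$ follows.
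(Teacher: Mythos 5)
Your argument is correct, but it reaches the conclusion by a noticeably different route than the paper. Both proofs ultimately rest on Lemma \ref{MPL1}, but the paper applies it directly to the window $u'$: it observes that $U_m=\bigl(u^{(t,m)}\bigr)^{(n,m)}$ still has period $t$ (adding $m$ at both positions $t$ and $n=2t$ preserves the period), that $U_m(t+1-\ell:n-1)=u'$ because position $n$ is outside the window, and hence that $u'$ simultaneously has almost period $t$ and period $\ell$; Fine--Wilf then gives period $\gcd(t,\ell)$ of $u'$ itself, and minimality of $\ell$ forces $\gcd(t,\ell)=\ell$ with no further work. You instead unpack the $\ell$-periodicity of $u'$ into the three families (A), (B), (C), read off periods $\ell$ and $t-\ell$ of the shorter word $u(1:t-1)$, apply Fine--Wilf there to get period $g=\gcd(\ell,t)$, and then must transfer $g$ back to $u'$ to contradict $\ell$-periodicity. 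That transfer is the price of your decomposition: it is not quite only the middle position $\ell$ that needs checking, since the comparisons $u'_j$ versus $u'_{j+g}$ that straddle the prefix/suffix boundary shift the underlying index of $u$ by $t-g$ rather than $g$, and one needs $g\mid t$ (true, as $g=\gcd(\ell,t)$) to close those cases --- but the ``short case analysis on $j$'' you promise does go through. The paper's trick of altering position $n$ buys a one-line deduction; your version is more hands-on and makes the combinatorics of $u'$ explicit, which also streamlines your (correct) verification of the second claim via $u'_t=u_{t-\ell}=u_\ell=u_t+m$. The paper argues that final step slightly differently, noting that every position of $u'\oplus(u_n+m)$ divisible by $\ell$ must carry the letter $u_t+m$ and that $u_n+m=u_t+m$ by the period $t$ of $u$; both verifications are sound.
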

\begin{proof}
    First, we note that $W = \left(w^{(t,w_t+m)} \right)^{(n,w_n+m)}$ clearly has period length $t$, and so $W(t+1-\ell:n-1)$ has almost period length $t$.  Additionally, since $W(t+1-\ell:n-1)=w'$, we know that $W(t+1-\ell:n-1)$ is $\ell$-periodic.  Hence by Lemma \ref{MPL1}, $W(t+1-\ell:n-1)$ has period of length $p=\hbox{gcd}(t, \ell)$.  However since $w'$ is given to be $\ell$-periodic, the minimum period length is $\ell$ and so we must have that $p=\ell$ and thus $\ell$ divides $t$.

    To show that $w' \oplus (w_n+m)$ has period $\ell$, we note that $w'$ has period $\ell$, and that $w'_\ell=w_t^{(t,w_t+m)}=w_t+m$.  Having period $\ell$ implies that $w'_{k} = w_t+m$ for all $k$ that is divisible by $\ell$.  Since $w_n+m$ is the $(t+\ell)$th letter in $w' \oplus (w_n+m)$, and this is divisible by $\ell$, we need that $w_n+m=w_t+m$ in order for $w' \oplus (w_n+m)$ to have period $\ell$.  But this is given to be true since $w$ has period $t$.
\end{proof}

\section{Identifying Codes}\label{IDSet}

We begin by building up to the definition of an identifying code.  This requires careful definitions of directed distance and $t$-balls.  The graphs that we focus on in this paper are directed de Bruijn graphs.
  
    The directed de~Bruijn graph, denoted $\vec{\mathcal{B}}(d,n)$, has vertex set $\A_d^n$.  An edge from vertex $v=v_1v_2 \ldots v_n$ to vertex $w=w_1w_2 \ldots w_n$ exists if and only if the suffix of $v$ equals the prefix of $w$, or more formally $v_2v_3 \ldots v_n = w_1 w_2 \ldots w_{n-1}$.  In these graphs, we consider only \textit{directed} distance in which the distance from vertex $v$ to $w$ is denoted as $d(v,w)$ and is defined as the length of the shortest directed path from $v$ to $w$,

\begin{defn}
     The \textbf{in-ball of radius $t$} centered at vertex $v$ is the set {\boldmath $B_t^-(v)$} $= \{u \in V(G) \mid d(u,v) \leq t\}.$ 
     
     A special variation on this definition is when we consider the in-ball of radius 1, or the \textbf{in-neighborhood} of a vertex $v$.  The closed in-neighborhood is given by $N^-[v]= B_1^-(v)$, and the \textbf{open in-neighborhood} of $v$ is given by $N^-(v)= B_1^-(v) \setminus \{v\}$.
\end{defn}

The two following lemmas are useful in working with distances in $\B(d,n)$ and their proofs are self-evident.

\begin{lem}\label{lem:distance} \rm In $\B(d,n)$ there is a directed path of length $t\leq n$ from $v$ to $w$ if and only if $v(t+1:n) = w(1:n-t)$.  That is, if and only if the rightmost $n-t$ letters of $v$ are the same as the leftmost $n-t$ letters of $w$. \end{lem}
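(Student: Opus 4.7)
The plan is to prove both implications by unrolling the single-edge shift that defines $\vec{\mathcal B}(d,n)$, with the forward direction handled inductively on $t$ and the reverse direction handled by writing down an explicit path.

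For the forward direction, I would induct on $t$. The base case $t=0$ says $x=y$ iff $x(1:n)=y(1:n)$, which is tautological. For the inductive step, suppose there is a directed path of length $t$ from $x$ to $y$, passing through a penultimate vertex $z$ with $(z,y)\in E(\vec{\mathcal B}(d,n))$. By the definition of an edge, $z(2:n)=y(1:n-1)$, and by the inductive hypothesis applied to the length $t-1$ subpath from $x$ to $z$, we have $x(t:n)=z(1:n-t+1)$. Taking the rightmost $n-t$ letters of $z(1:n-t+1)$ and substituting via the edge relation yields $x(t+1:n)=y(1:n-t)$.

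For the reverse direction, I would exhibit the path directly. Assuming $x(t+1:n)=y(1:n-t)$, define for $i=0,1,\dots,t$ the intermediate string
\[ v_i \;=\; x(i+1:n)\,\oplus\,y(n-t+1:n-t+i), \]
a string of length $(n-i)+i=n$. Then $v_0 = x$, and $v_t = x(t+1:n)\oplus y(n-t+1:n) = y(1:n-t)\oplus y(n-t+1:n)=y$, using the hypothesis. To verify that each consecutive pair $(v_i,v_{i+1})$ is an edge, I would compute
\[ v_i(2:n) \;=\; x(i+2:n)\,\oplus\,y(n-t+1:n-t+i) \;=\; v_{i+1}(1:n-1), \]
which is exactly the edge condition in $\vec{\mathcal B}(d,n)$.

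There is essentially no obstacle here; the only subtlety is keeping the index bookkeeping clean, in particular choosing the intermediate vertices so that their trailing letters come from the correct suffix of $y$ (namely $y(n-t+1:n)$ rather than from $y(1:t)$), so that the terminal vertex matches $y$ exactly rather than an unrelated rotation. This is presumably why the authors deem the proof ``self-evident'': both directions reduce immediately to iterating the one-step shift.
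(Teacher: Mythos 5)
Your proof is correct; the paper offers no written proof (it declares the lemma self-evident), and your argument is exactly the standard unrolling of the one-step shift that the authors have in mind. The index bookkeeping in both the inductive step and the explicit path $v_i = x(i+1:n)\oplus y(n-t+1:n-t+i)$ checks out, including the degenerate cases $t=0$ and $t=n$.
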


\begin{lem}\label{lem:samenbrhd}  \rm In $\B(d,n)$ if vertices $v\ne w$ have the same prefix, then for all $u\not \in \{v,w\}, d(u,v) = d(u,w)$.  In particular, $B_t^-(v)\setminus\{v\} = B_t^-(w)\setminus \{w\}$ for all $t\leq n$.\end{lem}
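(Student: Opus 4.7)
The plan is to reduce both $\vec d(u,x)$ and $\vec d(u,y)$ to the characterization provided by Lemma~\ref{lem:distance}. Stated contrapositively, that lemma says that $\vec d(u,x)$ is the smallest $k \in \{0,1,\dots,n\}$ for which $u(k+1:n) = x(1:n-k)$; the case $k=n$ is vacuous (both sides are empty strings) and always succeeds, so distances are well-defined and bounded by $n$. The analogous characterization holds for $\vec d(u,y)$.

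I would then exploit the key fact that $x$ and $y$ share the same prefix, i.e.\ $x_i = y_i$ for every $i \le n-1$, which implies $x(1:j) = y(1:j)$ for every $j \le n-1$. Consequently, for every $k \ge 1$ we have $n-k \le n-1$, and the two conditions ``$u(k+1:n) = x(1:n-k)$'' and ``$u(k+1:n) = y(1:n-k)$'' are literally the same equation. The only remaining case is $k=0$, which asks whether $u=x$ or $u=y$; for $u \notin \{x,y\}$ both possibilities fail simultaneously. Hence the smallest witnessing $k$ is identical for the two targets, yielding $\vec d(u,x) = \vec d(u,y)$.

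The ``in particular'' statement then falls out of the pointwise distance equality: for every vertex $u \notin \{x,y\}$ and every $t \le n$ we have $u \in B_t^-(x) \Leftrightarrow u \in B_t^-(y)$, so once the vertices in $\{x,y\}$ are set aside the two in-balls contain exactly the same vertices. I anticipate no genuine obstacle; the entire argument is just invoking Lemma~\ref{lem:distance} and keeping track of the boundary indices $k=0$ and $k=n$, which is why the authors call the proof self-evident. Conceptually, in a de~Bruijn graph the reachability of a vertex from distance at most $t \le n$ depends only on the prefix of the target, so the final letter plays no role beyond distinguishing the vertex from itself.
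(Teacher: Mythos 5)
Your reduction to Lemma~\ref{lem:distance} is exactly the argument the paper has in mind (it gives no written proof, calling the lemma self-evident), and your proof of the first assertion is correct: for $k\ge 1$ the overlap condition $u(k+1:n)=x(1:n-k)$ involves only the first $n-k\le n-1$ letters of the target, hence is literally the same equation for $x$ and for $y$; the $k=0$ condition fails for both when $u\notin\{x,y\}$; and since a path of length $n$ always exists, the two minima coincide.

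There is, however, a genuine gap in your final step, one you inherit from the statement itself. What your argument establishes is $B_t^-(x)\setminus\{x,y\}=B_t^-(y)\setminus\{x,y\}$ --- you even say ``once the vertices in $\{x,y\}$ are set aside'' --- but the displayed conclusion removes only $x$ from the first ball and only $y$ from the second. These two sets differ exactly when one of $x,y$ lies in the other's in-ball, which can certainly happen for prefix-sharing vertices: in $\B(2,3)$ take $x=000$ and $y=001$; then $x\to y$ is an edge, so $B_1^-(x)\setminus\{x\}=\{100\}$ while $B_1^-(y)\setminus\{y\}=\{000,100\}$. So the ``in particular'' clause as printed is false, and your argument (correctly) does not prove it. The version you do prove is the one the paper actually uses: in Theorem~\ref{DirIC} neither $x$ nor $y$ belongs to $S$, so intersecting with $S$ erases any discrepancy at $x$ and $y$ and $\id(x)=\id(y)$ still follows. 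You should either state the conclusion as $B_t^-(x)\setminus\{x,y\}=B_t^-(y)\setminus\{x,y\}$ or add the hypothesis that $\vec d(x,y)>t$ and $\vec d(y,x)>t$.
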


\begin{defn}\label{def:idcode}
Given a subset $S\subset V(G)$, the \textbf{$t$-identifying set with respect to $S$} for vertex $v$ is given by $\textsf{ID}_t^S(v) = B_t^-(v) \cap S$. 
 
 A \textbf{$t$-identifying code} is a set $S \subseteq V(G)$ such that each vertex has a unique, non-empty $t$-identifying set.  That is, for every $v\in V(G)$, $\textsf{ID}_t^S(v) \ne \emptyset$, and for all pairs $v,w \in V(G)$ we have $\textsf{ID}_t^S(v) \neq \textsf{ID}_t^S(w)$.  The variable $t$ is referred to as the \textbf{radius} of the identifying code.  
 
 A vertex $u \in \textsf{ID}_t^S(v) \triangle \textsf{ID}_t^S(w)$ is said to \textbf{separate} $v$ and $w$.  See Figure \ref{exIdCode} for an identifying code in the graph $\vec{\mathcal{B}}(2,3)$.
\end{defn}

\begin{figure}
\begin{center}

\begin{tikzpicture}[->,>=stealth',shorten >=1pt,auto,node distance=2cm,
  thick,main node/.style={circle,draw,font=\sffamily\bfseries,scale=0.75},new node/.style={circle,fill=black,text=white,draw,font=\sffamily\bfseries,scale=0.75}]

  \node[main node] (0) {000};
  \node[new node]  (1) [above right of=0] {001};
  \node[main node] (2) [below right of=1] {010};
  \node[new node]  (4) [below right of=0] {100};
  \node[main node] (5) [right of=2]       {101};
  \node[new node]  (6) [below right of=5] {110};
  \node[new node]  (3) [above right of=5] {011};
  \node[main node] (7) [below right of=3] {111};

  \path[every node/.style={font=\sffamily\small}]
    (0) edge node [left]      {} (1)
        edge [loop left] node {} (0)
    (1) edge node [left]      {} (3)
        edge node [right]     {} (2)
    (2) edge [bend right] node{} (5)
        edge node [right]     {} (4)
    (3) edge node [right]     {} (6)
        edge node [right]     {} (7)
    (4) edge node [left]      {} (0)
        edge node [right]     {} (1)
    (5) edge [bend right] node{} (2)
        edge node [right]     {} (3)
    (6) edge node [right]     {} (5)
        edge node [right]     {} (4)
    (7) edge [loop right] node{} (7)
        edge node [right]     {} (6);

\end{tikzpicture}

\end{center}
\caption{A 1-identifying code in the graph $\vec{\mathcal{B}}(2,3)$ (black vertices).  A 2-identifying code in this graph requires all vertices but one of $000,111$, and there are no $t$-identifying codes for $t \geq 3$.} \label{exIdCode}
\end{figure}
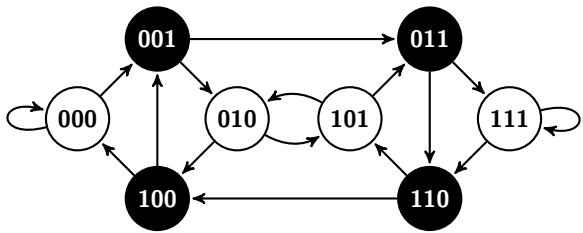

In the above definitions, if $t$ is omitted from the notation (i.e. identifying code instead of $t$-identifying code), then it is assumed that $t=1$.\medskip

Note that not every graph has a $t$-identifying code for each $t$.  In particular if the graph has two vertices with equal in-balls of radius $t$, then the graph has no $t$-identifying code.  The topic of such \lq $t$-twins' and the resulting non-existence of $t$-identifying codes is covered in Theorem \ref{MPT12}.

\begin{thm}
\label{DirIC} If $\B(d,n)$ is a $t$-identifiable graph, then the  size of any $t$-identifying code is at least $d^{n-1}(d-1)$.
\end{thm}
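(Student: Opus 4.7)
The plan is to prove the lower bound by a pigeonhole argument on the complement of $S$, using the fact that vertices with a common length-$(n-1)$ prefix behave like near-twins with respect to in-balls.

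First, I partition $V(\vec{\mathcal{B}}(d,n))$ into blocks indexed by prefixes in $\A_d^{n-1}$: for each prefix $p$, the block $V_p = \{p\oplus a : a \in \A_d\}$ consists of the $d$ vertices sharing that prefix. These $d^{n-1}$ blocks partition the $d^n$ vertices of the graph. Rewriting the target bound as $|S| \geq d^n - d^{n-1}$, it suffices to show that $|V(G)\setminus S| \leq d^{n-1}$, which will follow if each block $V_p$ contains at most one vertex outside $S$.

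The key step is to show that two distinct vertices from the same block cannot both lie outside $S$. Suppose $x,y \in V_p$ with $x\neq y$ and $x,y \notin S$. By Lemma \ref{lem:samenbrhd}, for every vertex $u \notin \{x,y\}$ we have $\vec{d}(u,x)=\vec{d}(u,y)$, so $B_t^-(x)\setminus\{x,y\} = B_t^-(y)\setminus\{x,y\}$. Since neither $x$ nor $y$ belongs to $S$, intersecting with $S$ kills any difference contributed by $x$ or $y$:
\[
\textsf{ID}_S(x) = B_t^-(x)\cap S = \bigl(B_t^-(x)\setminus\{x,y\}\bigr)\cap S = \bigl(B_t^-(y)\setminus\{x,y\}\bigr)\cap S = \textsf{ID}_S(y).
\]
This contradicts $S$ being a $t$-identifying code. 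Therefore at most one vertex per block is missing from $S$, giving $|V(G)\setminus S| \leq d^{n-1}$ and hence $|S| \geq d^{n-1}(d-1)$.

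There is no substantive obstacle here: everything reduces to Lemma \ref{lem:samenbrhd} plus a one-line pigeonhole count. The only subtlety is being careful when removing both $x$ and $y$ from the two in-balls before intersecting with $S$, since a priori one of them might lie in the other's in-ball; the hypothesis $x,y \notin S$ ensures this asymmetry is invisible to $\textsf{ID}_S$.
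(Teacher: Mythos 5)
Your proof is correct and follows essentially the same route as the paper's: fix a prefix $w\in\A_d^{n-1}$, use Lemma \ref{lem:samenbrhd} to show two non-codewords sharing that prefix would have identical $t$-identifying sets, and conclude that at most one vertex per prefix class can be omitted from $S$. If anything, your version is slightly more careful than the paper's, since you remove both $x$ and $y$ before intersecting with $S$, which correctly handles the edge case (e.g.\ $y=b^n$, $x=b^{n-1}a$) where one of the pair lies in the other's in-ball.
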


\begin{proof}  Choose $t\leq n$ and $a\ne b$ in $\A$.  Suppose that for some $u\in \A^{n-1}$, neither $v=u\oplus a$ nor $w=u\oplus b$ is in an arbitrary vertex subset $S$.  Since $v$ and $w$ share a prefix, by Lemma \ref{lem:samenbrhd}, $B_t^-(v)\setminus\{v\} = B_t^-(w)\setminus \{w\}$.  Since neither $v$ nor $w$ is in $S$, $\id(v) = \id(y)$. Thus $S$ is not a $t$-identifying code.  Thus for each possible $u\in \A^{n-1}$, a $t$-identifying code must contain, at least, all but one of $u\oplus a$ for $a\in \A$.  Thus a $t$-identifying code for $\B(d,n)$ must have size at least $d^{n-1}(d-1)$.\end{proof}

Note that the result above is independent of the radius $t$.  An interesting consequence of this is the fact that increasing the radius of our identifying code does not produce any decrease in the cardinality of a minimum identifying code.  For example, consider the potential application of identifying codes in sensor networks.  One might think that by increasing the sensing power (which corresponds to the radius of the identifying code) we would be able to place fewer sensors and thus incur a savings overall.  However, Theorem \ref{DirIC}  implies that providing more powerful  (and usually more expensive) sensors does not allow us to place fewer sensors.  Thus we should use sensors that have sensing distance equivalent to radius one.  In fact, in the case of 2-identifying codes in $\vec{\mathcal{B}}(2,3)$, we actually require an extra vertex for a minimum size of seven!

The remainder of this section is organized as follows.  We first provide a construction of a minimum $t$-identifying code for $\vec{\mathcal{B}}(d,n)$ with $t \geq 2$, and $n \geq 2t$ in Theorems \ref{MPMain} and \ref{thm:2ident}.  Following the proof of this result, we highlight some variations that provide identifying codes for several other instances.  Finally, we highlight an alternative construction for 1-identifying codes for all $\vec{\mathcal{B}}(d,n)$ when $d \neq 2$ and $n$ is odd.

\begin{thm}\label{MPMain}
    Suppose that $n \geq 5, d \geq 2, t \geq 2$, and $n \geq 2t$.  Then the following set $S$ is a minimum $t$-identifying code of size $d^{n-1}(d-1)$ in $\vec{\mathcal{B}}(d,n)$.
\begin{eqnarray*}
    S & = & \left\{w \in \A_d^n \mid \hbox{for some } m \hbox{ and } \ell \leq t, w^{(t,w_t+m)}(t+1-\ell:n-1) \hbox{ is} \right. \\
      &   & \hspace{10mm}  \left. \hbox{$\ell$-periodic,}\hbox{ but } w^{(t,w_t+m)}(t+1-\ell:n-1)\oplus (w_n+m) \hbox{ is not.}\right\} \\
      &   & \hspace{50mm}  \cup \\
      &   & \left\{ w \in \A_d^n \mid w_t \neq w_n \hbox{ and } w^{(t,w_t+m)}(t+1-\ell:n-1) \right. \\
      &   & \hspace{17mm}  \left. \hbox{ is not $\ell$-periodic for any $m$ and } \ell \leq t\right\}
\end{eqnarray*}
\end{thm}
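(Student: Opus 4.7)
The plan is to verify three properties: $|S|=d^{n-1}(d-1)$; $\id(x)\ne\emptyset$ for every $x$; and $\id(x)\ne\id(y)$ for distinct $x,y$. Optimality in size then follows immediately from Theorem~\ref{DirIC}.

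For the cardinality, I would partition $\A_d^n$ according to the prefix $w=x(1:n-1)$ and show that exactly one last letter $a=x_n$ makes $w\oplus a\notin S$. The case split is driven by
\[P(w)=\{(m,\ell)\in\A_d\times[t] : w^{(t,m)}(t+1-\ell:n-1)\text{ is $\ell$-periodic}\},\]
which depends only on $w$. If $P(w)=\emptyset$, Part~1 of the defining condition is vacuous for every $a$, and Part~2 excludes exactly $a=w_t$. If $P(w)\ne\emptyset$, then the second clause of Part~2 fails for every $a$, while $w\oplus a\notin S$ iff, for each $(m,\ell)\in P(w)$, the extension $w^{(t,m)}(t+1-\ell:n-1)\oplus(a+m)$ is again $\ell$-periodic; this pins $a$ to the residue $w^{(t,m)}_{n-\ell}-m\pmod d$. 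Lemmas~\ref{MPL2} and~\ref{L8} are the tools that force these constraints from different valid $(m,\ell)$ to coincide at a single residue, yielding exactly one excluded $a$.

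For non-emptiness of $\id(x)$, I would argue by contradiction that the $d$ in-neighbors of $x$ together with $x$ itself cannot all sit outside $S$: the $d$ in-neighbors split into $d$ distinct prefix-classes, each class losing at most one element, and demanding every one of these losses occur at $x_{n-1}$ imposes enough periodic structure on $x$ to be incompatible with the hypothesis $n\ge 2t$. For distinguishability, the same-prefix case follows from Lemma~\ref{lem:samenbrhd}: one has $\id(x)\triangle\id(y)\subseteq\{x,y\}$ and at least one of $x,y$ is in $S$, with the degenerate subcase $x\in B_t^-(y)$ reducing to constant strings which I would dispatch directly. For the different-prefix case, I would locate the smallest $t'\le t$ at which the suffix match $u(t'+1:n)=x(1:n-t')$ and the analogous match for $y$ (from Lemma~\ref{lem:distance}) cease to be simultaneously realizable, then construct a separator $u\in S$ by freely choosing its first $t'$ letters to hit one of the two parts of the definition of $S$.

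I expect the main obstacle to be this final construction of a separating $u\in S$ when $x$ and $y$ have different prefixes. The separator must simultaneously realize the correct directed distance to one of $x,y$ but not the other \emph{and} satisfy the elaborate periodicity condition defining $S$; whether Part~1 or Part~2 of the definition is easier to invoke depends on the almost-periodicity of the shared tail of the two vertices, and this is precisely where Lemmas~\ref{MPL1}, \ref{MPL3}, and \ref{L2.16} should carry the weight, via a subcase analysis driven by the position and nature of the divergence between the two prefixes.
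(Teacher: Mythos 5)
Your cardinality argument is essentially the paper's: both reduce to showing that each prefix $w \in \A_d^{n-1}$ excludes exactly one final letter, and both get this from the rigidity lemmas (the paper invokes Lemma~\ref{L2.16} to show that your set $P(w)$ has at most one element, which is cleaner than forcing the constraints from several elements of $P(w)$ to coincide, but the content is the same). Two secondary problems before the main one. For non-emptiness, your route through the $d$ in-neighbors of $x$ is shakier than it needs to be: the in-neighbors $a\oplus x(1:n-1)$ lie in $d$ \emph{distinct} prefix classes, so the ``one exclusion per prefix'' count gives you nothing there, and the periodicity contradiction you hope for is not obviously available. The natural choice is the $d$ vertices $0^{t-1}\oplus a\oplus x(1:n-t)\in B_t^-(x)$, which differ only in position $t$; since the excluded strings form a matching between position-$t$ and position-$n$ letters within each frame, at least $d-1$ of these lie in $S$ (this is exactly the observation the paper makes when it notes $x''=x'^{(t,1)}$). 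Also, in the same-prefix case the degenerate subcase $x\in B_t^-(y)$ does not reduce to constant strings: a path of length $s\le t$ from $x$ to $y$ combined with equal prefixes forces $x$ to have period $s$, so you need the Fine--Wilf machinery (Lemmas~\ref{MPL1} and~\ref{MPL3}) to finish, not a direct check.

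The genuine gap is the different-prefix case, which you yourself flag as the obstacle; your proposed strategy --- find the smallest $t'$ at which the suffix matches fail and then ``freely choose'' the first $t'$ letters of a separator to land in $S$ --- does not work as stated, because the candidate separators are not free. To lie in $B_t^-(y)$ a vertex must end in $y(1:n-t'')$ for some $t''\le t$, and membership in $S$ is governed by its letters at positions $t$ and $n$ together with a periodicity condition on positions $t+1-\ell$ through $n-1$, all of which are largely dictated by $y$ once the distance is fixed. The paper instead splits on $k$, the first index where $x$ and $y$ differ, and hand-builds a different separator in each regime: for $k=1$ the four strings $0^{t-1}\oplus c\oplus x(1:n-t)$ and $1^{t-1}\oplus c\oplus y(1:n-t)$, with a $2$-periodicity analysis of the one bad subcase; for $2\le k\le n-t$ the pair $1^t\oplus y(1:n-t)$ and $1^{t-1}\oplus 0\oplus y(1:n-t)$, shown to avoid $B_t^-(x)$ because their first $s+t-1$ letters contain no occurrence of the pattern $01$ while every member of $B_t^-(x)$ must contain one; and for $k>n-t$ the pair $u=0^{n-k}\oplus x(1:k)$ and $v=0^{n-k}\oplus y(1:k)$, where $u\in B_t^-(y)$ forces $x(1:k)$ to be $(k-p)$-periodic and hence $u\notin S$ via Lemmas~\ref{MPL3} and~\ref{L8}. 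None of these constructions, nor the trichotomy on $k$ that organizes them, is recoverable from your outline, so the proposal leaves the core of the distinguishability proof unestablished.
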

\begin{proof}
    Consider an arbitrary $w \in \A_d^n$.  We note that by Lemma \ref{L2.16} that $w^{(t,i)}(t+1-\ell : n-1)$ is $\ell$-periodic for at most one $\ell \leq t \leq \frac{n}{2}$ and for at most one $i \in \mathcal{A}_d$.

In order to determine the cardinality of the set $S$, we will determine which strings $w$ are \textit{not} included in the set $S$.  We have two cases:  either $w^{(t,w_t+m)}(t+1-\ell:n-1)$ is $\ell$-periodic for some $m$ and $\ell$, or it is not for any $m$ and $\ell$.

If $w^{(t,w_t+m)}(t+1-\ell:n-1)$ is $\ell$-periodic for some $m$ and $\ell$, then there exists a unique $b$ such that $w^{(t,w_t+m)}(t+1-\ell:n-1) \oplus b$ is also $\ell$-periodic.  In this case, $w \not \in S$ if and only if $w_n=b-m$.  Hence for each $w(1:n-1)$ in this case, there is exactly one value for $w_n$ that creates a string $w \not \in S$.

On the other hand, if $w^{(t,w_t+m)}(t+1-\ell:n-1)$ is not $\ell$-periodic for any $m$ and $\ell$, then $w \not \in S$ if and only if $w_t = w_n$.  Hence for each $w(1:n-1)$ in this case, there is exactly one value for $w_n$ that creates a string $w \not \in S$.

Since each string $w(1:n-1)$ must fall into exactly one of the two cases, and each case there is only one choice for $w_n$ to create a string outside of the set $S$, we must have that $d^{n-1}$ strings are not in $S$, leaving the cardinality of $S$ at $d^n-d^{n-1}$, or $d^{n-1}(d-1)$.

Additionally, it is important to note that there is only one choice for $r$ such that $w^{(n,r)} \not \in S$, and only one choice for $s$ such that $w^{(t,s)} \not \in S$. These two facts follow when we observe that if $w^{(t,a)}(t+1-\ell : n-1) \oplus b$ is $\ell$-periodic, then we must have that $w \not \in S$ if and only if $w_t = w_n+a-b$.

    Now that we have established the cardinality of $S$, we must show that no two vertices have the same identifying sets.  Let $v,w \in \vec{\mathcal{B}}(d,n)$, and let $k$ be the smallest index such that $v_k \neq w_k$.

    \begin{description}
        \item[$k=1$:]  Without loss of generality, we may assume that $v_1=0$ and $w_1=1$.  Observe that we have the following strings contained in the in-balls of radius $t$ for each.
        $$\begin{array}{rcccl}
            v' & = & 0^{t-1} \oplus 0 \oplus v(1:n-t) & \in & B_t^-(v), \\
            v''& = & 0^{t-1} \oplus 1 \oplus v(1:n-t) & \in & B_t^-(v), \\
            w' & = & 1^{t-1} \oplus 1 \oplus w(1:n-t) & \in & B_t^-(w), \hbox{ and} \\
            w''& = & 1^{t-1} \oplus 0 \oplus w(1:n-t) & \in & B_t^-(w).
        \end{array}$$

        Note that $v' \not \in B_t^-(w)$ and $w' \not \in B_t^-(v)$, since $B_t^-(v)$ does not contain any vertices beginning with $1^{t+1}$ and $B_t^-(w)$ does not contain any vertices beginning with $0^{t+1}$.  Next, we notice that at least one of $v', v''$ is in $S$.  To see this, we note that $v''=v'^{(t,1)}$.  By the same point, we must have that at least one of $w', w''$ is a member of $S$.

        Next, we note that if at least one of $v', w'$ is a member of $S$, we can use that string to separate $v$ and $w$.  Otherwise, if either $v'' \not \in B_t^-(w) \cap S$ or $w'' \not \in B_t^-(v) \cap S$, we can separate $v$ and $w$ with the given string.  As a last resort, we consider the case in which we have $v'' \in B_t^-(w) \cap S$ and $w'' \in B_t^-(v) \cap S$.  Then we must have:
        \begin{eqnarray*}
            v'' & = & 0^{t-1} \oplus w(1:n-t+1), \\
            w'' & = & 1^{t-1} \oplus v(1:n-t+1),
        \end{eqnarray*}
        as $v_t''$ is the only 1 in $v''(1:t+1)$ and $w_t''$ is the only 0 in $w''(1:t+1)$.  From this, we get the following string equalities:
        \begin{eqnarray*}
            v(1:n-t) & = & w(2:n-t+1), \\
            w(1:n-t) & = & v(2:n-t+1).
        \end{eqnarray*}
        From these string equalities, we see that $v_i=w_{i+1}=v_{i+2}$ and $w_i=v_{i+1}=w_{i+2}$ for all $i=1, 2, ... , n-t-1$, and so $v(1:n-t)$ and $w(1:n-t)$ are both 2-periodic (since $v_1 \neq w_1$ we cannot have 1-periodic).  Hence, $v''(t-1:n) = 0 \oplus 1 \oplus v(1:n-t)$ is also 2-periodic (recall that $v_1=0$ and $w_1=1$), so $v'' \not \in S$.  Hence we must have $v' \in S$, and so we may use $v'$ to separate $v$ and $w$.

        \item[$2 \leq k \leq n-t$:]  We know that there must exist some $s$ such that $v_1 \ldots v_{s-1} = w_1 \ldots w_{s-1}$, and these substrings are constant, that is $v_1=\ldots = v_{s-1}=w_1=\ldots=w_{s-1}$.  Without loss of generality we may assume that $v_1 \ldots v_{s-1} = 0^{s-1} = w_1 \ldots w_{s-1}$ and $v_s = 1$, and so $2 \leq s \leq k$.  Define the following strings.
            \begin{eqnarray*}
                w' & = & 1^t \oplus w(1:n-t) \hbox{ and} \\
                w'' & = & 1^{t-1} \oplus 0 \oplus w (1:n-t)
            \end{eqnarray*}
            As we saw in the previous case, we have that $\{w', w''\} \subseteq B_t^-(w)$ and $\{w', w'' \} \cap S \neq \emptyset$.  Now consider an arbitrary vertex $u \in B_t^-(v)$.  Since $v_{s-1}v_s=01$ and $2 \leq s \leq k \leq n-t$, we know that $u(i-1:i)=01$ for some $i \in [s, s+t]$.

            Additionally, we consider $w'$ and $i \in [2, s+t-1]$.  For $i \leq t$, we know that $w'(i-1:i)=11$, and for $i=t+1$, we have that $w'(i-1:i)=10$, and finally for $t+2 \leq i \leq s+t-1$ we have $w'(i-1:i)=00$.  Similarly, for $i \in [2,s+t-1]$, we must have $w''(i-1:i) \in \{00,10,11\}$.  This implies that $w'(1:s+t-1)$ and $w''(1:s+t-1)$ do not contain the substring $01$, and so if $w'$ or $w''$ is a member of $B_t^-(v)$, we must have either $d(w',v)=t$ or $d(w'',v)=t$, respectively.  Hence we must have $w'_{t+i}=v_i$ or $w''_{t+i}=v_i$ for $i \in [1, n-t]$, and therefore that $v_k=w'_{t+k}=w''_{t+k}=w_k$, which is a contradiction.  Thus neither $w'$ nor $w''$ can be a member of $B_t^-(v)$, and hence both strings separate $v$ and $w$.

            \item[$k > n-t$:]  Since we must have $v_1=w_1$, we may assume without loss of generality that $v_1=w_1 \neq 0$.  Define the following strings.
            \begin{eqnarray*}
                v' & = & 0^{n-k} \oplus v(1:k) \in B_t^-(v) \hbox{ and} \\
                w' & = & 0^{n-k} \oplus w(1:k) \in B_t^-(w)
            \end{eqnarray*}
            We have $v'(1:n-1)=w'(1:n-1)$ and $v'_n = v_k \neq w_k = w'_n$.  Note that this implies that for $a=v_{t-n+k}$ we have $v'=\left(v'^{(t,a)}\right)^{(n,v_k)}$ and $w'=\left(v'^{(t,a)}\right)^{(n,w_k)}$ with $v_k \neq w_k$.  By our argument at the very beginning of the proof, at most one of these can lie outside of $S$, and so we must have $\{v',w'\} \cap S \neq \emptyset$.

            We now have two cases.  First, if both $v' \not \in B_t^-(w)$ and $w' \not \in B_t^-(v)$, then any string from $\{v',w'\} \cap S$ separates $v$ and $w$, and we are done.  Otherwise, assume without loss of generality that $v' \in B_t^-(w)$.  Then we must have $v'=x \oplus w(1:p)$ for at least one $p \in [n-t,n]$ and some substring $x$.  Take $p$ to be the largest such $p$ possible.  Since $w_1 \neq 0 = v'_i$ for all $i \in [1,n-k]$, we must have $p \leq k$.  Additionally, if $p=k$ then we have $w(1:k)=w(1:p)=v(1:k)$, which is a contradiction since $w_k \neq v_k$.  This implies that we must have $p<k$.  Hence we must have the following string of equalities:
            \begin{eqnarray*}
                0^{n-k} \oplus v(1:k) & = & v' \\
                & = & x \oplus w(1:p) \\
                & = & x \oplus v(1:p).
            \end{eqnarray*}
            Thus we have $v(k-p+1:k)=v(1:p)$, or $v_i=v_{k-p+i}$ for $i=1,2,\ldots , p$.  Additionally we note the following equalities hold:
            \begin{eqnarray*}
                2(k-p) & = & 2k-2p \\
                & \leq & 2k-2(n-t) \\
                & \leq & k+2t-n \\
                & \leq & k.
            \end{eqnarray*}
            Hence since $v_i=v_{k-p+i}$ for $i \in [1, p]$ and $2(k-p) \leq k$, we know that $v(1:k)$ has period $(k-p)$.  In fact, since we chose $p$ to be maximum, $v(1:k)$ is $(k-p)$-periodic.

            Next, we show that $v'(t+1-(k-p):n)$ is also $(k-p)$-periodic.  First, we note the following inequalities:
            \begin{eqnarray*}
                n-(t+1-(k-p))+1 & = & n-t+k-p \\
                & \geq & 2(k-p).
            \end{eqnarray*}
            The last line comes from the following fact: \begin{equation} \label{Eq:1}
            n-t \geq t \geq n-p \geq k-p. 
\end{equation} Hence our string length is at least $2(k-p)$, and from our previous paragraphs, so long as $v'(t+1-(k-p):n)$ is contained in $v'(n-k+1:n)=v(1:k)$, we know that it must have period $(k-p)$.  For this we note that $$t+1-(k-p) \geq (n-p)+1-(k-p) = n-k+1,$$ and so $v'(t+1-(k-p):n)$ indeed has period $(k-p)$, and thus $v' \not \in S$, except if $(n-1)-(t+1-(k-p))+1 < 2(k-p)$.  In this case, all inequalities must be equalities in Equation (\ref{Eq:1}).  From this it follows that $k-p=t$, $k=n=2t$, and $p=t$.

            If $v'^{(t,v'_t+m)}(t+1-\ell:n-1)$ is $\ell$-periodic for some $\ell < k-p=t$ and $m$, then by Lemma \ref{L8} we must have that $v'^{(t,v'_t+m)}(t+1-\ell:n-1) \oplus (v'_n+m)$ is also $\ell$-periodic, and hence $v' \not \in S$.  On the other hand, if $v'^{(t,v'_t+m)}(t+1-\ell:n-1)$ is not $\ell$-periodic for any $\ell < t$ and $m$, then we note that $v'_n=v'_{2t}=v'_t$, so again $v' \not \in S$.  Therefore in all cases we have $v' \not \in S$, so we must have $w' \in S$.

            All that remains is to show that $w' \not \in B_t^-(v)$.  We note that if $w' \in B_t^-(v)$, then $w(1:k)$ is $\ell$-periodic for some $\ell \leq k+t-n$ (using the same argument as we used to show that $v(1:k)$ was $(k-p)$-periodic).  Since $v(1:k)=\left( w(1:k) \right)^{(k,w_k+m)}$ for some $m$, by Lemma \ref{MPL3} it is not possible that both $v(1:k)$ and $w(1:k)$ are periodic.  Hence we must have $w' \not \in B_t^-(v)$, and so we may use $w'$ to separate $v$ and $w$.

    \end{description}
\end{proof}

When we combine the previous theorem with the following result, we have a complete set of constructions for minimum $t$-identifying codes in $\vec{\mathcal{B}}(d,n)$ with $t \geq 2$ and $n \geq 2t$.

\begin{thm}\label{thm:2ident} Let $n>3$ and $S = \A_d^n \setminus \{\left(w^{(2,a)}\right)^{(n,a)} \mid w \in \A_d^n \text{ and } a \in \A_d\}$. If $n$ is even, then $S$ is a $2$-identifying code for $\B(d,n)$. If $n$ is odd, then $S'= (S \cup \{(ab)^{\frac{n-1}{2}}b \mid \ a\ne b \in  \A_2\}) \setminus \{(ab)^{\frac{n-1}{2}}a \mid \ a\ne b \in  \A_2\}$ is a $2$-identifying code for $\B(d,n)$.  In both these cases, the 2-identifying code is of minimum size $d^{n-1}(d-1)$.\end{thm}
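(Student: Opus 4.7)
The plan is to verify the cardinality $|S| = |S'| = d^{n-1}(d-1)$ and then to check that $\textsf{ID}_{S}$ (respectively $\textsf{ID}_{S'}$) is injective and nowhere empty on $\B(d,n)$. The cardinality count is immediate: the family $\{x_1 a x_3 \cdots x_{n-1} a \mid a \in \A_d\}$, ranged over all $(x_1, x_3, \ldots, x_{n-1}) \in \A_d^{n-2}$, contains exactly $d^{n-1}$ distinct strings, so $|S| = d^n - d^{n-1}$, and the swap defining $S'$ trades equal-sized sets. Non-emptiness is also quick: among the $d^2$ distance-$2$ in-neighbors $\{ab\,v(1\!:\!n-2) \mid a, b \in \A_d\}$ of any vertex $v$, the $d(d-1)$ with $b \neq v_{n-2}$ lie in $S$, giving $|B_2^-(v) \cap S| \geq d(d-1)$.

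For distinctness of the identifying sets of $x \neq y$, I would split on whether $x^- = y^-$. When $x^- \neq y^-$, let $k \leq n-1$ be the first index where $x$ and $y$ differ. The natural distinguishing candidate is $v = ab\,x(1\!:\!n-2) \in B_2^-(x)$, which lies in $S$ iff $b \neq x_{n-2}$, leaving $d(d-1)$ valid pairs $(a,b)$. Via Lemma \ref{lem:distance}, the membership $v \in B_2^-(y)$ breaks into three sub-conditions indexed by $\vec{d}(v,y) \in \{0,1,2\}$: (i) $v = y$ forces one specific $(a,b)$; (ii) $v$ an in-neighbor of $y$ forces $b = y_1$ and hence $d$ choices of $a$; and (iii) $v$ at distance $2$ from $y$ requires $x(1\!:\!n-2) = y(1\!:\!n-2)$, which occurs only when $k = n-1$. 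For $k \leq n-2$ and $d \geq 3$, enough pairs $(a,b)$ survive; for $d=2$ or $k = n-1$, one switches to an in-neighbor $a\,x(1\!:\!n-1)$ of $x$ (or a symmetric candidate from $B_2^-(y)$), with a case split on whether $x$ is constant on positions $1, \ldots, n-1$.

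When $x^- = y^-$, Lemma \ref{lem:samenbrhd} gives $\textsf{ID}_S(x) \triangle \textsf{ID}_S(y) \subseteq \{x, y\}$, so a short calculation shows $\textsf{ID}_S(x) = \textsf{ID}_S(y)$ iff for each $u \in \{x, y\}$ either $u \notin S$ or $\vec{d}(u, \text{other}) \leq 2$. The exclusion rule $u_2 = u_n$ can hold for at most one of $x, y$ (since $x_n \neq y_n$), so at least one lies in $S$. A direct expansion of $\vec{d}(x,y) \leq 2$ under $x^- = y^-$ and $x_n \neq y_n$ forces $x$ to have period $2$, and analogously for $\vec{d}(y,x)$; the two period-$2$ conditions on $x$ and $y$ cannot coexist, and for even $n$ the period-$2$ condition on $x$ already contradicts $x_n \neq y_n$, so $S$ alone distinguishes when $n$ is even. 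For odd $n$, the residual obstruction pinpoints exactly the pairs $((ab)^{(n-1)/2}a,\,(ab)^{(n-1)/2}b)$ with $a \neq b$, and the swap defining $S'$ flips $S$-membership on precisely these strings, restoring distinguishability.

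The main obstacle I anticipate is the $k = n-1$ sub-case of the unequal-prefix analysis: every distance-$2$ in-neighbor of $x$ automatically lies in $B_2^-(y)$ when $x$ and $y$ agree on their first $n-2$ positions, so one must abandon that candidate and argue via in-neighbors (of either $x$ or $y$) or via $x, y$ themselves, with some care when $x_1 = x_{n-1}$ forces all in-neighbors of $x$ outside of $S$. A secondary care point is to verify that the $S'$ swap does not spoil distinguishability of pairs other than the highly structured ones it was engineered to fix; this is routine given how rigid the swapped strings are. (For the argument to cover all $d \geq 2$, the set $\A_2$ in the definition of $S'$ should be read as $\A_d$, since analogous obstructions arise for period-$2$ strings over any two-letter sub-alphabet of $\A_d$.)
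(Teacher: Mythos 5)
Your proposal is essentially correct but takes a genuinely different route from the paper. The paper does not argue pairwise: it computes $\textsf{ID}_S(x)$ explicitly for every vertex, writing it as $(\A\oplus C)$, together with $\A\oplus\{x^-\}$ when $x_1\ne x_{n-1}$ and with $\{x\}$ when $x_2\ne x_n$, where $C=\{ax^{--}\mid a\in\A\setminus\{x_{n-2}\}\}$; it then checks in four cases (indexed by whether $x_1=x_{n-1}$ and whether $x_2=x_n$) that these unions are disjoint except for the exceptional alternating strings, so that $|\textsf{ID}_S(x)|\in\{d^2-d,\ d^2-d+1,\ d^2,\ d^2+1\}$ reveals which case $x$ falls into, after which the contents of $\textsf{ID}_S(x)$ reconstruct $x$. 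That reconstruction argument treats all pairs uniformly and thereby sidesteps exactly the sub-cases your separation argument must handle by hand: your count of bad pairs $(a,b)$ closes the argument only for $d\ge 3$ and $k\le n-2$, and the cases $d=2$ and $k=n-1$, together with the check that deleting $(ab)^{(n-1)/2}a$ from the code does not destroy the separation of some \emph{other} pair, are where essentially all the remaining work sits. (The paper handles that last point via $B_2^+((ab)^{\frac{n-1}{2}}a)\cup\{b(ba)^{\frac{n-1}{2}}\}=B_2^+(b(ba)^{\frac{n-1}{2}})$ plus the disjointness of the relevant in-balls for $n>3$; it is not a one-line step.) What your route buys in exchange is a complete and correct analysis of the equal-prefix case, which isolates the colliding pairs $((ab)^{\frac{n-1}{2}}a,(ab)^{\frac{n-1}{2}}b)$ exactly as the paper does.

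Two further points. First, your deferred sub-cases are genuinely nontrivial rather than routine: when $x(1\!:\!n-2)=y(2\!:\!n-1)$ and $y(1\!:\!n-2)=x(2\!:\!n-1)$ hold simultaneously, both strings are forced to be $2$-periodic on their first $n-1$ letters, every distance-$2$ candidate fails, and whether the in-neighbors $a\oplus x^-$ are even available in $S$ depends on the parity of $n$ through the condition $x_1\ne x_{n-1}$; so "switch to an in-neighbor" is the right move but requires real case analysis. Second, your parenthetical observation is correct and worth emphasizing: the obstruction pairs arise for every pair of distinct letters $a,b\in\A_d$, not only for $a,b\in\A_2$, so the $\A_2$ in the statement must indeed be read as $\A_d$ for the odd-$n$ construction to work when $d\ge 3$.
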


\begin{proof}
    Consider an arbitrary string $w \in \A_d^n$. We'll consider the cardinality of $\textsf{ID}_t^S(w)$ in four cases  based on the relations between $w_1, w_{n-1}$ and between $w_2, w_n$.  First let $C= \{a \oplus w(1:n-2) \mid a\in \A\setminus\{w_{n-2}\} \}$.\medskip

{\bf Case 1. } If $w_2 = w_n$ and $w_1 = w_{n-1}$, then $\textsf{ID}_t^S(w)=\A\oplus C$.  Thus $|\textsf{ID}_t^S(w)|=d^2-d$.\medskip %$$T= \A \oplus \{a x^{--} \mid a\in \A\setminus\{x_{n-2}\} \}.$$  Thus $|T|=d^2$.\medskip

{\bf Case 2.} If $w_2 \neq w_n$ and $w_1 = w_{n-1}$, then $\textsf{ID}_t^S(w) = (\A\oplus C) \cup \{w\}$. %$$T= \A \oplus \{a x^{--} \mid a\in \A\setminus\{x_{n-2}\} \} \cup \{x\}.$$
If $w\in \A\oplus C$ then $w(2:n) = a \oplus w(1:n-2)$ for some $a\in \A\setminus\{w_{n-2}\}$.  This implies that we have $w_1 = w_3 = w_5 = \cdots$, and also that $w_2 = w_4 = w_6 = \cdots$.  Since this case requires that $w_1 = w_{n-1}$, we must have that either $n$ is even or that $w_1 = w_2 = w_3 = \cdots = w_n$.  In either case, this contradicts our assumption that $w_2 \neq w_n$.  Thus $w\not\in \A\oplus C$, and we conclude that $|\textsf{ID}_t^S(w)|=d^2-d+1$.\medskip

{\bf Case 3.} If $w_2 = w_n$ and $w_1 \neq w_{n-1}$, then $\textsf{ID}_t^S(w)= \A \oplus \{C\cup\{w(1:n-1)\}\}$.
%$$T= \left(\A \oplus \{a x^{--} \mid a\in \A\setminus\{x_{n-2}\} \} \right) \cup \left(\A \oplus x^-\right).$$
If $w(1:n-1) \in C$, then $aw_1w_2 \cdots w_{n-2} = w_1w_2 \cdots w_{n-1}$.  This implies that we have $w_1 = w_2 = w_3 = \cdots = w_{n-2} = w_{n-1}$.  This contradicts our assumption that $w_1 \neq w_{n-1}$.  Thus $w(1:n-1) \not \in C$, and we conclude that $|\textsf{ID}_t^S(w)| = d^2$.\medskip

{\bf Case 4.} If $w_2 \neq w_n$ and $w_1 \neq w_{n-1}$, then $\textsf{ID}_t^S(w)=  (\A \oplus \{C\cup\{w(1:n-1)\}\}) \cup \{w\}$.
%$$T= \left(\A \oplus \{a x^{--} \mid a\in \A\setminus\{x_{n-2}\} \} \right) \cup \left(\A \oplus x^-\right)\cup \{x\}.$$
As in Case 3, since $w_1\neq w_{n-1}, \A \oplus \{C\cup\{w(1:n-1)\}\}$ contains $d^2$ distinct elements.  Let us consider whether $w\in \A \oplus \{C\cup\{w(1:n-1)\}\}$. If not, then $|\textsf{ID}_t^S(w)| = d^2+1$. There are two cases to consider.\medskip

%\quad {\bf a.}  If $x^- = ax^{--}$ for some $a\in \A\setminus\{x_{n-2}\}$, then $ax_1x_2 \cdots x_{n-2} = x_1x_2 \cdots x_{n-1}$.  This implies that $x_1 = x_2 = x_3 = \cdots = x_{n-2} = x_{n-1}$.  This contradicts our assumption that $x_1 \neq x_{n-1}$. Thus this case does not occur.\medskip

\quad {\bf a.}  If $w\in \A\oplus C$, then $w(2:n) = w(1:n-1)$.  This implies that we have the following chain of equalities:  $w_1 = w_2 = w_3 = \cdots = w_{n-1} = w_n$.  This contradicts the assumptions that $w_2 \neq w_n$ and $w_1 \neq w_{n-1}$.  Thus, this case does not occur.\medskip

\quad {\bf b.}  If $w\in \A\oplus \{w(1:n-1)\}$, then $w(2:n) = a\oplus w(1:n-2)$ for some $a\in \A$, and so $w_2w_3 \cdots w_n = aw_1w_2 \cdots w_{n-2}$.  This implies that $w_1 = w_3 = w_5 = \cdots$, and also that $w_2 = w_4 = w_6 = \cdots$.  If $n$ is even, this contradicts our assumptions that $w_2 \neq w_n$ and $w_1 \neq w_{n-1}$.  Thus for even $n$, this case does not occur.  

For $n$ odd, this case only occurs if $w =  (ab)^{\frac{n-1}{2}}a$ for some $b \in \A$.  In this case, we must modify $S$ to get an identifying code.  Note that $$B_2^- ((ab)^{\frac{n-1}{2}}a) \cup \{(ab)^{\frac{n-1}{2}}b\} = B_2^- ((ab)^{\frac{n-1}{2}}b).$$  Since our set $S$ contains $(ab)^{\frac{n-1}{2}}a$ but not $(ab)^{\frac{n-1}{2}}b$, we have $$\textsf{ID}_t^S\left((ab)^{\frac{n-1}{2}}a\right) = \textsf{ID}_t^S\left((ab)^{\frac{n-1}{2}}b\right).$$  Thus by adding $(ab)^{\frac{n-1}{2}}b$, we are able to create distinct identifying sets with respect to $S'=S \cup \{(ab)^{\frac{n-1}{2}}b\}.$  However, we note that we now have $$(ab)^{\frac{n-1}{2}}b, (ab)^{\frac{n-1}{2}}a \in S',$$ and also that $$B_2^+ \left((ab)^{\frac{n-1}{2}}a\right) \cup \left\{b(ba)^{\frac{n-1}{2}}\right\} = B_2^+ \left(b(ba)^{\frac{n-1}{2}}\right).$$  This implies that the inclusion of both $(ab)^{\frac{n-1}{2}}b$ and $(ab)^{\frac{n-1}{2}}a$ in our identifying code is only required if needed to identify vertex $(ab)^{\frac{n-1}{2}}a$ from vertex $b(ba)^{\frac{n-1}{2}}$, as otherwise they identify the same sets of vertices.  For $n>3$ we can identify $(ab)^{\frac{n-1}{2}}a$ differently from $b(ba)^{\frac{n-1}{2}}$ without using $b(ba)^{\frac{n-1}{2}}$ (they have disjoint in-balls of radius 2 in this case), we need only include $(ab)^{\frac{n-1}{2}}b$ and not $(ab)^{\frac{n-1}{2}}a$ in our identifying code.  Thus $S'$ is a 2-identifying code for $n>3$.\medskip

Reviewing all four cases presented, we can see that an identifying set completely determines its corresponding string $w$.  Hence we have provided a 2-identifying code $S$ when $n$ is even, and a separate 2-identifying code $S'$ when $n$ is odd.\end{proof}

Finally, we provide additional constructions of identifying codes.  Theorems \ref{MPT10} and \ref{MPT11} have proofs very similar to that of Theorem \ref{MPMain}, so we omit them here.

\begin{thm}\label{MPT10}
        Assume that $d \geq 2$, and $n \geq 3$.  Then the following subset $S$ is an optimal 1-identifying code of size $d^{n-1}(d-1)$ in $\vec{\mathcal{B}}(d,n)$.
    \begin{align*}
    S  = & \left\{w \in \A_d^n \mid \hbox{for some } m \hbox{ and } \ell \in \{1,2\}, w^{(1,w_1+m)}(1:n-1) \hbox{ is $\ell$-periodic} \right. \\
        & \hspace{10mm}  \left. \hbox{or almost $\ell$-periodic, but } w^{(1,w_1+m)}(1:n-1)\oplus (w_n+m) \hbox{ is not.}\right\} \\
        & \hspace{50mm} \cup \\
        & \left\{ w \in \A_d^n \mid w_1 \neq w_n \hbox{ and } w^{(1,w_1+m)}(1:n-1) \right. \\
        & \hspace{17mm}  \left. \hbox{ is not $\ell$-periodic for any $m$ and } \ell \in \{1,2\} \right\}
\end{align*}
\end{thm}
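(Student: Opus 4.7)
The plan is to follow the strategy of Theorem \ref{MPMain} with $t=1$, but enlarged to permit $\ell \in \{1,2\}$ and almost periodicity, which is necessary because for $n \in \{3,4\}$ a prefix of length $n-1$ is too short to be $2$-periodic in the strict sense. The structure is the same: establish the cardinality of $S$, then verify that identifying sets separate every pair of vertices.

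First I would establish $|S| = d^{n-1}(d-1)$. Fix the middle block $w = x(2:n-1) \in \mathcal{A}_d^{n-2}$ and, as in Theorem \ref{MPMain}, consider the $d^2$ strings $x_{ij} = i \oplus w \oplus j$ obtained by varying the endpoints. Using Lemmas \ref{MPL1}--\ref{MPL3} (with the almost-period analogues obtained by the same proofs), I would show that $x_{ij}(1:n-1) = i \oplus w$ is $\ell$-periodic or almost $\ell$-periodic for at most one $\ell \in \{1,2\}$ and at most one value of $i$. The matching argument of Theorem \ref{MPMain} then yields exactly $d$ excluded pairs per middle block, giving $|S| = d^{n-1}(d-1)$.

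Next I would verify the identifying property. The simple form $B_1^-(v) = \{v\} \cup \{a \oplus v(1:n-1) : a \in \mathcal{A}_d\}$ replaces the more delicate in-ball analysis of Theorem \ref{MPMain}. Given $x \neq y$, let $k$ be the smallest index with $x_k \neq y_k$. If $1 \leq k \leq n-1$ the in-neighbor sets of $x$ and $y$ are disjoint, because their last $n-1$ letters are $x(1:n-1)$ and $y(1:n-1)$ and these disagree at position $k$; any $v \in B_1^-(x) \cap S$ with $v \neq y$ therefore separates, and the cardinality count supplies at least $d-1$ candidates. If $k = n$ the in-neighborhoods of $x$ and $y$ coincide, so $B_1^-(x) \triangle B_1^-(y) \subseteq \{x,y\}$. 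Step 1 ensures that at least one of $x, y$ lies in $S$; when neither is a constant string, neither lies in the in-neighborhood of the other and the included vertex separates. When $x = c^n$ is constant, the defining condition (a) of $S$ forces $y = c^{n-1}c' \in S$ (using $m = 0$, $\ell = 1$, since $c^{n-1}$ is $1$-periodic while $c^{n-1}c'$ is not), and $y$ separates since $y \notin B_1^-(x)$.

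The main technical obstacle will be the $d=2$ boundary of the case $1 \leq k \leq n-1$: there the cardinality count only guarantees a single in-neighbor of $x$ in $S$, and if this in-neighbor happens to coincide with $y$ the direct argument fails. As in the $k=1$ case of the proof of Theorem \ref{MPMain}, the resolution should come from showing that such a coincidence forces a $2$-periodic structure on both $x$ and $y$ that is incompatible with the defining condition of $S$, after which the symmetric in-neighbor of $y$ provides the required separating vertex.
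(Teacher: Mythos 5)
Your proposal is correct and takes essentially the approach the paper intends: the paper omits the proof of Theorem \ref{MPT10} entirely, remarking only that it is ``very similar to that of Theorem \ref{MPMain},'' and your adaptation of that argument to $t=1$ --- the pairing count giving $|S|=d^{n-1}(d-1)$, the disjoint-in-neighborhood separation when the first disagreement is at position $k\leq n-1$, the shared-prefix case $k=n$ (with the constant-string subcase handled via condition (a)), and the $d=2$ coincidence forcing a $2$-periodic alternating pair that the periodicity conditions exclude from $S$ --- is precisely that adaptation, with the almost-periodic clauses playing the role you identify for $n\in\{3,4\}$. The only caveat is that the final $d=2$ step is described rather than executed, but the resolution you indicate does go through.
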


\begin{thm}\label{MPT11}
    Assume that $d\geq 2$. Then the following subset $S$ is a $t$-identifying code of size $d^{n-1}(d-1)+d^{t}$ in the directed de Bruijn graph $\vec{\mathcal{B}}(d,n)$, if $n=2t-1\geq 5$.
\begin{align*}
S =&\{w\in\A_d^n \,|\, \text{for some $m$ and $l< t-1$: } w^{(t,w_t+m)}(t+1-l:n-1) \text{ } \\
		& \hspace{5mm}	\text{is $l$-periodic, but $w^{(t,w_t+m)}(t+1-l:n-1)\oplus (w_n+m)$ is not.}\} \\
		&\hspace{50mm} \cup \\
       & \left\{w\in\A_d^n \,|\, w_t\neq w_n \text{ and $w^{(t,w_t+m)}(t+1-l:n-1)$ is not $l$-periodic}  \right. \\
       & \hspace{17mm} \left. \hbox{for any $m$ and $l< t-1$.}\right\} \\
		&\hspace{50mm} \cup \\
       &\{w\in\A_d^n \,|\, w^{(t,w_t+m)}(1:n-1) \text{ is almost $t$-periodic, for some $m$.} \}
\end{align*}
\end{thm}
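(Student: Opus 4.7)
The plan is to mirror the proof of Theorem \ref{MPMain} as closely as possible, with the boundary value $n=2t-1$ forcing a small number of technical modifications. Structurally, I would first count $|S|$ and then verify the identifying property via the same three-way case split on the first index at which two strings $x$ and $y$ disagree.

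For the cardinality, I would handle $S_1\cup S_2$ (the first two defining sets) by the fiber-by-fiber pairing argument of Theorem \ref{MPMain}. Since both sets restrict to $\ell<t-1$, Lemma \ref{L2.16} still guarantees that for each fixed choice of the coordinates other than $x_t$ and $x_n$, at most one pair $(\ell,i)$ can make $x_{ij}(t+1-\ell:n-1)$ be $\ell$-periodic; the same pairing argument produces exactly $d^{n-1}$ excluded strings and therefore $|S_1\cup S_2|=d^{n-1}(d-1)$. For $S_3$, I would observe that ``$x^{(t,m)}(1:n-1)$ is almost $t$-periodic'' unfolds into the constraints $x_i=x_{i+t}$ for $i=1,\dots,t-2$, none of which involves index $t$; a direct count then gives $|S_3|=d^{t+1}$. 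Finally, I would verify that among these $d^{t+1}$ strings, precisely $d^{t+1}-d^t$ are already in $S_1\cup S_2$ (the ones where the final coordinate $x_n$ happens to be the unique periodic continuation), so that the net contribution of $S_3$ is $d^t$ new strings and $|S|=d^{n-1}(d-1)+d^t$.

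For the identifying property, I would re-run the three cases on $k$, the least index with $x_k\neq y_k$. The cases $k=1$ and $2\leq k\leq n-t=t-1$ translate almost verbatim from Theorem \ref{MPMain}: the candidate separators $x',x'',y',y''$ involve only small-$\ell$ periodicity considerations that remain inside the $\ell<t-1$ regime covered by $S_1\cup S_2$, so the same reasoning (using Lemma \ref{MPL3} to rule out simultaneous periodicity) produces a separator guaranteed to lie in $S$.

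The main obstacle is the third case $k>n-t=t-1$, where the boundary $n=2t-1$ is precisely what breaks the Theorem \ref{MPMain} argument. Setting $u=0^{n-k}\oplus x(1:k)$ and $v=0^{n-k}\oplus y(1:k)$ and choosing $p<k$ maximal with $u=w\oplus y(1:p)$, the earlier analysis shows that $x(1:k)$ is $(k-p)$-periodic and that $u(t+1-(k-p):n)$ would be $(k-p)$-periodic; but when $k$ is close to $n$, the substring length is only $2t-2$, too short for a strict period of the required size, and instead we land in the \emph{almost} $t$-periodic regime. This is exactly what the third set encodes, so I would verify that in each boundary subcase where the Theorem \ref{MPMain} reasoning fails to place $v$ into $S_1\cup S_2$, the desired separator is captured by $S_3$; the chosen separator then fails to lie in the other vertex's in-ball by another application of Lemma \ref{MPL3}. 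The delicate part of the argument is bookkeeping exactly which almost-$t$-periodic strings must be added to preserve separation in this boundary range, which is precisely why $S_3$ is appended rather than absorbed into the first two sets.
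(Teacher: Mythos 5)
The paper does not actually prove Theorem \ref{MPT11}; it states only that the proof is ``very similar to that of Theorem \ref{MPMain}'' and omits it. Your plan is therefore exactly the intended route, and your cardinality computation is essentially right: the matching argument on the fibers obtained by fixing all coordinates except positions $t$ and $n$ still gives $|S_1\cup S_2|=d^{n-1}(d-1)$ (Lemma \ref{L2.16} applies because $\ell<t-1$ forces $k=t\leq (n-1)-\ell_1$), and membership in $S_3$ depends only on the fiber (neither $x_t$ nor $x_n$ enters the almost-$t$-periodicity condition), so $S_3$ consists of $d^{t-1}$ full fibers, of which exactly $d$ strings per fiber lie outside $S_1\cup S_2$, giving the extra $d^t$. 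One point you should not skip: you need that for every string with the skeleton $x_i=x_{i+t}$ ($i\in[t-2]$) there is at least one $m$ for which the extension has \emph{no period shorter than} $t$ (a Fine--Wilf argument shows at most one $m$ is bad, and $d\geq 2$ saves you); otherwise $|S_3|$ is not $d^{t+1}$.

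The genuine gap is your claim that the cases $k=1$ and $2\leq k\leq n-t$ go through ``almost verbatim.'' Enlarging $S$ by $S_3$, and shrinking the admissible periods from $\ell\leq t$ to $\ell<t-1$, never hurts the steps that place a separator \emph{into} $S$, but it does break the steps of Theorem \ref{MPMain} that require a candidate to be \emph{outside} $S$. Concretely, in the last-resort subcase of $k=1$ the original proof concludes $x''\not\in S$ from the $2$-periodicity of $x''(t-1:n)$; for $n=2t-1$ with $t=3$ (so $n=5$, which the theorem covers) the constraint $\ell<t-1$ excludes $\ell=2$ entirely, and moreover $x''=0^{t-1}\oplus 1\oplus x(1:n-t)$ can land in $S_3$ (e.g.\ $x''=00101$ is almost $3$-periodic after modifying position $t$), so the contradiction evaporates and that subcase must be re-argued, not transcribed. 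The same caution applies throughout: every ``$u\not\in S$'' assertion in the $k>n-t$ case must now also rule out membership in $S_3$, not just in $S_1\cup S_2$. Your outline flags only the third case as delicate; in fact each non-membership check is a place where the modified code changes the argument, and until those are done the proof is incomplete.
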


We note that the construction in Theorem \ref{MPT11} is not optimal.  To find an optimal $t$-identifying code when $n=2t-1$ is an open problem to be considered in the future.  For the cases when $n < 2t-1$, we have the following theorem.

\begin{thm}\label{MPT12}
    There is no $t$-identifying code in the directed de Bruijn graph $\vec{\mathcal{B}}(d,n)$ when $n\leq 2t-2$.
\end{thm}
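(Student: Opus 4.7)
The plan is to exhibit $t$-twins, that is, distinct vertices $x,y \in V(\B(d,n))$ with $B_t^-(x) = B_t^-(y)$. Any such pair makes a $t$-identifying code impossible, because $x$ and $y$ will always have identical $t$-identifying sets with respect to every $S\subseteq V$.

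I would split on whether $n\leq t$ or $t+1\leq n\leq 2t-2$. The case $n\leq t$ is immediate: the diameter of $\B(d,n)$ is $n$, so $B_t^-(v)$ equals the whole vertex set for every $v$, and any two distinct vertices are $t$-twins. So the work is in the principal case $t+1\leq n\leq 2t-2$ (non-empty only for $t\geq 3$). There I would pick two distinct letters $a,b\in\A_d$ and define $w\in\A_d^{n-1}$ to be $a$ in every coordinate except $w_{n-t+1}=b$. Set $x=w\oplus a$ and $y=w\oplus b$. Because $x$ and $y$ share the length-$(n-1)$ prefix $w$, Lemma~\ref{lem:samenbrhd} yields $B_t^-(x)\setminus\{x\}=B_t^-(y)\setminus\{y\}$, so it remains to prove $x\in B_t^-(y)$ and $y\in B_t^-(x)$.

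Both containments reduce, via Lemma~\ref{lem:distance}, to checking two short string equalities. For the length-$t$ directed path $x\to y$, I would verify $x(t+1:n)=a^{n-t}=y(1:n-t)$; both sides collapse to $a^{n-t}$ because the defect position $n-t+1$ lies in neither $[1,n-t]$ nor $[t+1,n-1]$. For the length-$(t-1)$ directed path $y\to x$, I would verify $y(t:n)=a^{n-t}b=x(1:n-t+1)$; now the letter $b$ appears once on each side, coming from $y_n=b$ on the left and from $w_{n-t+1}=b$ on the right. The hypothesis $n\leq 2t-2$ is precisely what guarantees $n-t+1<t$, so the defect sits strictly between the two $a$-blocks of $w$ and never interferes with these equalities.

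The main obstacle is choosing the defect position so that $w$ simultaneously supports an almost-period of length $t$ (giving $x\to y$) and of length $t-1$ (giving $y\to x$) without the two periods forcing $a=b$. Fine--Wilf (Lemma~\ref{MPL1}) says such coexistence in a word of length $n-1$ becomes impossible as soon as $n-1\geq t+(t-1)-\gcd(t,t-1)=2t-2$, so the threshold $n\leq 2t-2$ in the theorem is exactly the largest $n$ for which this single construction can work; once the defect is placed at $n-t+1$, the rest of the argument is a routine comparison of substrings.
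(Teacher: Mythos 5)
Your proof is correct and is essentially the paper's own argument: your twin pair $a^{n-t}\oplus b\oplus a^{t-1}$ and $a^{n-t}\oplus b\oplus a^{t-2}\oplus b$ is exactly the paper's pair $v=0^{n-t}\oplus 1\oplus 0^{t-2}\oplus 0$ and $u=0^{n-t}\oplus 1\oplus 0^{t-2}\oplus 1$ up to relabeling the alphabet. The only differences are cosmetic: you separate out the trivial case $n\leq t$ (which the paper's construction only covers when $n\geq t$) and spell out the two substring checks via Lemma~\ref{lem:distance} in more detail.
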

\begin{proof}
Let $v=0^{n-t}\oplus 1 \oplus 0^{t-2} \oplus 1$ and $w = 0^{n-t}\oplus 1 \oplus 0^{t-2} \oplus 0$.  Since $B_t^-(v)$ and $B_t^-(w)$ contain all vertices that end with $0^{n-t}$ or $0^{n-t}\oplus 1 \oplus 0^k$ where $k=0,1,\ldots, n-t-1$, $v$ and $w$ are $t$-twins.  Thus $\overrightarrow{\mathcal{B}}(d,n)$ has no $t$-identifying code.
\end{proof}

As an additional treat for the reader, we provide a simple construction for 1-identifying codes in $\vec{\mathcal{B}}(d,n)$ whenever we have either $d>2$ or $n$ odd.

\begin{thm}\label{1ident} If $n$ is odd, or $n$ is even and $d>2$, then $$S = \A_d^n \setminus \{a \oplus \A_d^{n-2} \oplus a \mid a \in \A_d\}$$ is an identifying code  for $\B(d,n)$.  Further this identifying code has optimal size  $(d-1)d^{n-1}$.\end{thm}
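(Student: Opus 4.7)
The plan is to verify three things: $|S| = (d-1)d^{n-1}$, each $\id(v)$ is non-empty, and $\id(x) \ne \id(y)$ whenever $x \ne y$. The cardinality is immediate: there are exactly $d \cdot d^{n-2} = d^{n-1}$ excluded strings, giving $|S| = d^n - d^{n-1}$, which matches the lower bound of Theorem \ref{DirIC} so optimality is automatic. For every vertex $v$, $B_1^-(v) = \{v\} \cup \{a \oplus v^- : a \in \A\}$, and the $d - 1 \ge 1$ in-neighbors $a \oplus v^-$ with $a \ne v_{n-1}$ all lie in $S$, so $\id(v)$ is non-empty.

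For distinctness, suppose $\id(x) = \id(y)$ with $x \ne y$. Every element of $\id(x) \cap N^-(x)$ has prefix $x^-$ and, lying in $\id(y) \subseteq N^-[y]$, either equals $y$ or lies in $N^-(y)$ (and hence also has prefix $y^-$). Thus if any one of the $d - 1$ elements in $\{a \oplus x^- : a \ne x_{n-1}\}$ lies in $N^-(y)$, we must have $x^- = y^-$; otherwise all of them equal $y$, forcing $d = 2$ and $y = \bar{x}_{n-1} \oplus x^-$, writing $\bar c$ for the unique element of $\A_2 \setminus \{c\}$.

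In the first case, $x$ and $y$ share a prefix but differ in the last letter, so $x_1 = y_1$ while $x_n \ne y_n$, and it is impossible for both $x_1 = x_n$ and $y_1 = y_n$ to hold. Hence at least one of $x, y$ lies in $S$; without loss of generality, $x \in S$. Then $x \in \id(x) = \id(y) \subseteq N^-[y]$, and since $x \ne y$ we have $x \in N^-(y)$, i.e., $x = a \oplus y^-$ for some $a \in \A$. Combined with $x^- = y^-$, this forces $x_i = x_{i-1}$ for all $i \ge 2$, so $x$ is constant; but then $x_1 = x_n$, contradicting $x \in S$.

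In the second case, applying the symmetric analysis starting from $y$ yields either $y^- = x^-$ (already handled) or $x = \bar{y}_{n-1} \oplus y^-$. Substituting the expression for $y$ into this relation gives $x_1 = \bar{x}_{n-2}$, $x_2 = \bar{x}_{n-1}$, and $x_i = x_{i-2}$ for all $i \ge 3$; the recursion says $x$ has period $2$. If $n$ is odd, then $n - 2$ is odd, so the period-$2$ recursion gives $x_{n-2} = x_1$, making $x_1 = \bar{x}_{n-2} = \bar{x}_1$ absurd. The only surviving configuration is $d = 2$ with $n$ even, which is explicitly excluded by hypothesis. The step I expect to require the most care is extracting the period-$2$ structure in this second case and confirming that the parity hypothesis genuinely forecloses it; once those identities are in hand the contradiction drops out immediately.
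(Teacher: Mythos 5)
Your proof is correct, and it reaches the conclusion by a noticeably different case decomposition than the paper's. The paper first computes $|\id(x)|$ exactly ($d-1$ if $x_1=x_n$, else $d$) and then splits on $|T|\in\{d-1,d\}$: when $|T|=d$ both $x$ and $y$ are codewords with arcs in both directions, forcing $\{x,y\}=\{(ab)^{n/2},(ba)^{n/2}\}$ with $n$ even, and the two identifying sets are written out explicitly and seen to differ when $d>2$; when $|T|=d-1$ neither vertex is in $S$, they share a prefix, and the definition of $S$ forces one of them into $S$, a contradiction. You instead split on where the $d-1$ in-neighbor codewords $\{a\oplus x^- : a\ne x_{n-1}\}$ land inside $N^-[y]$: either one of them lies in $N^-(y)$, giving $x^-=y^-$ and then a constant-string contradiction, or all of them coincide with $y$, which by a pigeonhole count forces $d=2$ together with the mutual-arc, period-$2$ structure that the parity hypothesis kills. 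Your route buys a uniform treatment of the $d>2$ mutual-arc case (a counting argument replaces the paper's explicit listing of $\id((ab)^k)$ and $\id((ba)^k)$), at the cost of a slightly longer argument in the shared-prefix case, where the paper gets its contradiction immediately from the fact that neither vertex lies in $S$. One small wording issue: the elements $a\oplus x^-$ of $N^-(x)$ have $x^-$ as their \emph{suffix}, not their prefix (in the paper's terminology the prefix of $a\oplus x^-$ is $a x_1\cdots x_{n-2}$); your subsequent deductions use the correct fact, so nothing breaks, but the terminology should be fixed.
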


\begin{proof} Define $S$ as in the statement of the theorem.     First, we will see that the identifying set for every vertex has size either $d$ or $d-1$.  Let $w=w_1w_2 \ldots w_n$, then $$N^-(w) \cap S = \{\A \oplus w_1w_2 \ldots w_{n-1} \} \setminus \{w_{n-1}w_1w_2 \ldots w_{n-1}\}.$$

If $w_1 = w_n$, then $\textsf{ID}_t^S(w) = N^-(w)\cap S$ has size $d-1$. Whereas, if $w_1 \neq w_n$,  then $\textsf{ID}_t^S(w) = \{w\} \cup N^-(w) \cap S$ has size $d$.\medskip

From this it is clear that every vertex has a non-empty identifying set. However we must also show that every identifying set is unique.  Suppose there are two distinct vertices $v,w \in V(\vec{\mathcal{B}}(d,n))$ such that $\\textsf{ID}_t^S(v) = \textsf{ID}_t^S(w)$.  Call their identical identifying set $T$.  We look at the two cases, $|T|=d$ and $|T|=d-1$, separately below.\medskip

Suppose that $|T|=d$.  Then $\{v,w\} \subseteq T$ by our assumption on $T$ and our earlier reasoning.  Since $v\ne w$, this means that $\B(d,n)$ contains both directed arcs $v \rightarrow w$ and $w \rightarrow v$.  This allows us to conclude that $\{v,w\} = \{(ab)^k, (ba)^k\}$ for some distinct $a,b \in \A$ with $k=n/2$.  In particular we must have $n$ even.  Below are the precise identifying sets for $v$ and $w$.

        \begin{eqnarray*}
            \textsf{ID}_t^S((ab)^k) & = &  \{(ab)^k, (ba)^k\} \cup \{c(ab)^{k-1}a \mid c \in \A \setminus \{a,b\} \} \\
            \textsf{ID}_t^S((ba)^k) & = &  \{(ab)^k, (ba)^k\} \cup \{c(ba)^{k-1}b \mid c \in  \A \setminus \{a,b\} \}
        \end{eqnarray*}

        If $d>2$ these two identifying sets are in fact different, which is a contradiction.\medskip

Suppose that $|T|=d-1$.  Then neither $v$ nor $w$ is in $T$, which means neither is in $S$.  However since their identifying sets are identical, this means that they have identical first neighborhoods. By definition of first neighborhoods, this means that $v$ and $w$ have the same prefix but different final letters. By then definition of $S$, one of $v,w$ (if not both) is a member of $S$, which is a contradiction.
\end{proof}

\section{Dominating, Resolving, and Determining Sets}\label{OtherSet}

In this section we examine other types of vertex sets which identify or classify vertices up to some graph property.  The properties used to define these sets are adjacency, distance, and automorphisms.

\newcommand{\sym}{{\rm Sym}}
\newcommand{\ptstab}{{\rm PtStab}}
\newcommand{\Det}{{\rm Det}}
\newcommand{\Z}{{\mathbb Z}}

\subsection{Dominating Sets}\label{Dominating}

\begin{defn}  A (directed) \textbf{$t$-dominating set} is a subset $S \subseteq V(G)$ such that for all $v \in V(G)$ we have $B_t^-(v) \cap S \neq \emptyset$.  That is, $S$ is a (directed) $t$-dominating set if every vertex in $G$ is within (directed) distance $t$ of some vertex in $S$.  We denote the size of a minimum $t$-dominating set in a graph $G$ by $\gamma_t(G)$.
\end{defn}

Note that by definition every identifying code is also a dominating set, but not conversely.

\begin{thm}\label{domexact}
    \emph{\cite{DomConstr}}  For $d \geq 2$, $n \geq 1$, $\gamma_1(\vec{\mathcal{B}}(d,n)) = \left\lceil \frac{d^n}{d+1} \right\rceil$.
\end{thm}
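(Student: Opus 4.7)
The lower bound follows from a one-line counting argument: each $s\in S$ dominates at most $d+1$ vertices (itself together with its $d$ out-neighbors), so $|S|(d+1)\ge d^n$, forcing $|S|\ge \lceil d^n/(d+1)\rceil$. All the difficulty is in producing an explicit dominating set that matches this bound.

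My first attempt for the upper bound would be a color-class construction: define $\phi\colon\A_d^n\to\Z_{d+1}$ by $\phi(x)=\sum_i x_i\pmod{d+1}$ and try $S=\phi^{-1}(c)$. Passing from $x$ to an out-neighbor $x^+\oplus a$ shifts $\phi$ by $a-x_1$, so as $a$ ranges over $\{0,\ldots,d-1\}$ the closed out-ball $\{x\}\cup N^+(x)$ takes only $d$ distinct $\phi$-values (the choice $a=x_1$ returns $\phi(x)$ itself), always missing exactly one residue of $\Z_{d+1}$. Thus this simple coloring fails to be rainbow and the resulting color class is not dominating. I would therefore switch to exploiting a ``sibling'' structure: for each prefix $r\in\A_d^{n-1}$, the $d$ vertices $V_r=r\oplus\A_d$ all share the same in-neighborhood $U_r=\A_d\oplus r$ (by Lemma~\ref{lem:samenbrhd}), so $V_r$ is dominated as soon as either $S\cap U_r\ne\emptyset$ or $V_r\subseteq S$. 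This reduces the problem to a hybrid covering problem indexed by prefixes: pick a set $T\subseteq\A_d^{n-1}$ of ``option-B'' prefixes, form $V_T=\bigcup_{r\in T}V_r$, and then add one extra vertex per remaining suffix not already realized by $V_T$.

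The main obstacle is choosing $T$ so that the overall count is tight. A naive choice of $T$ as a dominating set of $\B(d,n-1)$ only yields $|S|\le d\,\gamma_1(\B(d,n-1))$, which iterates to the much weaker bound $d^{n-1}$. Squeezing the construction down to $\lceil d^n/(d+1)\rceil$ seems to require a more delicate balance between the two options and explicit use of the self-loops at the constant strings $a^n$ (whose closed out-balls have only $d$ vertices and so shift the counting exactly enough to absorb the ceiling). My proposed approach is therefore to build the construction around an Eulerian circuit in $\B(d,n-1)$---equivalently, a de~Bruijn sequence of order~$n$---partitioning its trace into blocks of $d+1$ consecutive steps, choosing one representative vertex per block, and patching the inevitable boundary defects using the constant strings; this is essentially the construction carried out in \cite{DomConstr}.
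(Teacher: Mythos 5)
Your lower bound is correct and complete: with the paper's convention that $v$ is dominated by $s$ when $s\in B_1^-(v)$, each $s$ dominates itself and its at most $d$ out-neighbors, so $|S|(d+1)\ge d^n$ and $|S|\ge\left\lceil \frac{d^n}{d+1}\right\rceil$. Be aware, however, that the paper does not prove this theorem at all; it is imported from \cite{DomConstr}, and the paper merely records the matching construction afterwards.

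The genuine gap is in your upper bound, which never arrives at a verified dominating set. The two ideas you correctly discard (the $\mathbb{Z}_{d+1}$-coloring and the prefix-covering recursion) are indeed dead ends, but the Eulerian-circuit plan you settle on also fails as described: if you partition a Hamiltonian cycle of $\B(d,n)$ (equivalently an Eulerian circuit of $\B(d,n-1)$) into blocks of $d+1$ consecutive vertices and keep one representative per block, that representative dominates itself and its $d$ out-neighbors, but among the other $d$ vertices of its own block only its immediate successor on the cycle is guaranteed to be one of those out-neighbors; the remaining $d-1$ block members are generically left undominated. The construction that actually works (the one the paper transcribes from \cite{DomConstr}) is arithmetic rather than Eulerian: identify $\A_d^n$ with $\mathbb{Z}_{d^n}$ via base-$d$ representation $i\mapsto X_i$, note that the out-neighbors of $X_i$ are exactly the $X_{di+j \bmod d^n}$ for $j=0,\ldots,d-1$, so a block of $k$ consecutive integers dominates the union of itself with a block of $dk$ consecutive integers, and then choose the starting point $m=d^{n-2}+d^{n-4}+\cdots$ so that these two intervals abut and jointly cover all of $\mathbb{Z}_{d^n}$ when $k=\left\lceil \frac{d^n}{d+1}\right\rceil$. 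Closing your argument with ``this is essentially the construction carried out in \cite{DomConstr}'' is circular for a blind proof attempt; the interval-to-interval behavior of the multiplication-by-$d$ map is the missing idea.
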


In \cite{DomConstr} a construction of a minimum dominating set for $\B(d,n)$ is given.  Key to this construction is the fact that every integer $m$ corresponds to a string (base $d$) in $\Z_d^n$, that we call $X_m$.   The construction utilizes a special integer $m$ defined by:

$$m = \left\{
        \begin{array}{ll}
          d^{n-2}+d^{n-4}+ \cdots + d^{n-2k} + \cdots + d^2 + 1 \bmod d^n, & \hbox{if $n$ is even;} \\
          d^{n-2}+d^{n-4}+ \cdots + d^{n-2k} + \cdots + d^3 + d \bmod d^n, & \hbox{if $n$ is odd.}
        \end{array}
      \right.$$
Let $D = \{m, m+1 , \ldots , m+ \lceil \frac{d^n}{d+1} \rceil -1\}$.  Now let $S$ be the set of strings $\{X_i \mid i \in D\}$.  Then $S$ is a minimum size dominating set for $\vec{\mathcal{B}}(d,n)$.\medskip

Next we provide constructions for $t$-dominating sets.  While others have considered some variations of $t$-dominating sets (such as perfect dominating sets in \cite{PDS}), it does not appear that the general $t$-dominating sets have been considered in the directed de Bruijn graph.

\begin{thm}
    The set $S\cup \{0^n\}$ where
\[
	S=\{w\in\A_d^n \,|\, w_{k(t+1)}\neq 0 \text{ for some $k\in\Z_+$ and } w_{i}=0 \text{ for all $i<k(t+1)$} \}
\]
is a $t$-dominating set of size
$$1+d^{n-t-1}(d-1)\left(\frac{1-d^{-(t+1)\left\lfloor \frac{n}{t+1}\right\rfloor}}{1-d^{-(t+1)}}\right)$$
in $\vec{\mathcal{B}}(d,n)$.
\end{thm}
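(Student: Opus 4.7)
The plan is to prove the theorem in two parts: first the domination property, then the cardinality count.

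For the domination claim, given a vertex $v\in\A_d^n$, I want to exhibit some $s\in S\cup\{0^n\}$ with $\vec{d}(s,v)\leq t$. By Lemma \ref{lem:distance}, this reduces to finding $j\in\{0,1,\ldots,t\}$ and $s$ such that $s(j+1:n)=v(1:n-j)$. The natural candidate is $s=0^j\oplus v(1:n-j)$; this has $\vec{d}(s,v)=j\leq t$, so it remains to arrange $s\in S\cup\{0^n\}$. I split on the location of the first nonzero coordinate of $v$. If $v=0^n$, then $0^n$ dominates $v$ trivially. If the first nonzero coordinate is at position $k$ with $k>n-t$, then $v(1:n-t)=0^{n-t}$, so choosing $j=t$ gives $s=0^n$, which dominates. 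Otherwise $k\leq n-t$, and I choose $k'=\lceil k/(t+1)\rceil$ and set $j=k'(t+1)-k$. Then $j\in\{0,1,\ldots,t\}$ by definition of the ceiling, and $k'(t+1)=j+k\leq k+t\leq n$, so position $k'(t+1)$ lies in $[1,n]$. The string $s=0^j\oplus v(1:n-j)$ then has $s_i=0$ for $i<k'(t+1)$ and $s_{k'(t+1)}=v_k\neq 0$, placing $s$ in $S$.

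For the cardinality, I partition $S$ by the value of the index $k$ in the defining condition (the ``block'' containing the first nonzero entry). For each $k\in\{1,2,\ldots,\lfloor n/(t+1)\rfloor\}$, a string in $S$ with first nonzero at position $k(t+1)$ is uniquely determined by the choice of a nonzero letter at coordinate $k(t+1)$ and free letters at coordinates $k(t+1)+1,\ldots,n$, contributing $(d-1)d^{n-k(t+1)}$ strings. Summing the geometric series
\[
|S|=\sum_{k=1}^{\lfloor n/(t+1)\rfloor}(d-1)d^{n-k(t+1)}=(d-1)d^{n-t-1}\cdot\frac{1-d^{-(t+1)\lfloor n/(t+1)\rfloor}}{1-d^{-(t+1)}},
\]
and since $0^n\notin S$, the total $|S\cup\{0^n\}|=|S|+1$ gives the claimed expression.

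The only nontrivial step is the domination argument, specifically verifying that the congruence $j\equiv -k\pmod{t+1}$ admits a representative $j\in[0,t]$ that is also compatible with the constraint $k'(t+1)\leq n$. Once one observes that the ceiling $k'=\lceil k/(t+1)\rceil$ satisfies $k'(t+1)\leq k+t$, the bound $k\leq n-t$ makes everything consistent, and the remaining ``edge'' case $k>n-t$ is handled by $0^n$. Everything else is bookkeeping.
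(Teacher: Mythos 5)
Your proposal is correct and follows essentially the same route as the paper: dominate each vertex by left-padding with the unique number $j\in[0,t]$ of zeros that aligns its first nonzero letter onto a multiple of $t+1$ (falling back to $0^n$ when the first nonzero letter sits within the last $t$ positions), and count $S$ by partitioning according to the position $k(t+1)$ of that first nonzero letter, summing the resulting geometric series. No gaps; the indexing and the edge case are handled just as in the paper.
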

\begin{proof}
Let $w$ be a string in $\A_d^n$. Assume that there are $k$ zeros at the beginning of $w$, but not $k+1$ zeros, i.e. $w=0^n$ or $w=0^k\oplus a\oplus w(k+2:n)$ for some $a\neq 0$. Let $l\in[0,t]$ be an integer so that $k+l\equiv t \pmod{t+1}$, i.e. $k+l=t+m(t+1)=(m+1)(t+1)-1$ for some $m\geq 0$. Now
$$0^l\oplus w(1:n-l)=0^l\oplus 0^k\oplus a\oplus w(k+2:n-l) =0^{k+l}\oplus a\oplus w(k+2:n-l)$$
belongs to $S$ except if $k+l\geq n$. If $k+l\geq n$, then $0^{n-k} \oplus w(1:k)=0^n\in S\cup\{0^n\}$ dominates $w$. Therefore every vertex is dominated by $S\cup\{0^n\}$.

There are $d^{n-k(t+1)}\cdot(d-1)$ strings which begin with exactly $k(t+1)-1$ zeros. Moreover, every string in $S\setminus\{0^n\}$ begins with at most $n-1$ zeros. This implies that $k(t+1)<n$ or $1\leq k \leq \left\lfloor \frac{n}{t+1}\right\rfloor$. Finally, $0^n$ is added to the dominating set $S\cup\{0^n\}$. Therefore the size of $S\cup\{0^n\}$ is
\begin{align*}
	& 1+\sum_{k=1}^{\left\lfloor \frac{n}{t+1}\right\rfloor} d^{n-k(t+1)}\cdot (d-1) \\
	& = 1+d^n(d-1)\sum_{k=1}^{\left\lfloor \frac{n}{t+1}\right\rfloor} \left(d^{-t-1}\right)^k \\
	& = 1+d^n(d-1)\left(-1+\sum_{k=0}^{\left\lfloor \frac{n}{t+1}\right\rfloor} \left(d^{-t-1}\right)^k \right) \\
	&	= 1+d^n(d-1)\left(-1+\frac{1-\left(d^{-t-1}\right)^{\left\lfloor \frac{n}{t+1}\right\rfloor +1}}{1-d^{-(t+1)}}\right) \\
	&	= 1+d^n(d-1)\left(\frac{d^{-(t+1)}-\left(d^{-(t+1)}\right)^{\left\lfloor \frac{n}{t+1}\right\rfloor +1}}{1-d^{-(t+1)}}\right) \\
	&	= 1+d^{n-t-1}(d-1)\left(\frac{1-d^{-(t+1)\left\lfloor \frac{n}{t+1}\right\rfloor}}{1-d^{-(t+1)}}\right). % \\
\end{align*}
\end{proof}

Note that the previous result is optimal if $n=t \mod t+1$.  Additionally, this result gives us the following lower bound on the size of a $t$-dominating set in $\vec{\mathcal{B}}(d,n)$.

\begin{thm}
    Bounds on the size of a $t$-dominating set in $\vec{\mathcal{B}}(d,n)$ are given by:
\begin{align*}
	\gamma_t(\vec{\mathcal{B}}(d,n))\geq
	\begin{cases}
		1+d^{n-t-1}(d-1)\left(\frac{1-d^{-(t+1)\left\lfloor \frac{n}{t+1}\right\rfloor}}{1-d^{-(t+1)}}\right)	& \text{\emph{if }} n\equiv t \pmod{t+1} \\
		d^{n-t-1}(d-1)\left(\frac{1-d^{-(t+1)\left\lfloor \frac{n}{t+1}\right\rfloor}}{1-d^{-(t+1)}}\right)	& \text{\emph{otherwise.}}
	\end{cases}
\end{align*}
\end{thm}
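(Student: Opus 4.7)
The plan is a standard sphere-covering (double-counting) argument. Because $S$ is a $t$-dominating set, every vertex $v \in V(\vec{\mathcal{B}}(d,n))$ lies in $B_t^+(y)$ for at least one $y \in S$, so summing the incidences over $y\in S$ yields $\sum_{y \in S}|B_t^+(y)| \geq d^n$. By Lemma \ref{lem:distance}, for each $0\le s\le t$ the vertices reachable from $y$ in exactly $s$ steps form the set $\{y(s+1:n)\oplus w : w\in\A_d^s\}$, whose cardinality is $d^s$; as $B_t^+(y)$ is the union over $s=0,1,\dots,t$ of these sets, $|B_t^+(y)| \le \sum_{s=0}^t d^s = (d^{t+1}-1)/(d-1)$. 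Combining the two inequalities gives
\[
    |S| \;\geq\; \frac{d^n(d-1)}{d^{t+1}-1}.
\]

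Next I would rewrite the theorem's bound in a more transparent form. Writing $q = \lfloor n/(t+1)\rfloor$ and $r = n - q(t+1)\in[0,t]$, the telescoping identity
\[
    (d^{t+1}-1)\sum_{k=1}^q d^{n-k(t+1)} \;=\; \sum_{k=1}^q \bigl(d^{n-(k-1)(t+1)}-d^{n-k(t+1)}\bigr) \;=\; d^n - d^r
\]
expresses the right-hand side of the theorem's inequality (without the ``$+1$'') as
\[
    L \;:=\; d^{n-t-1}(d-1)\cdot\frac{1-d^{-(t+1)q}}{1-d^{-(t+1)}} \;=\; \frac{(d-1)(d^n-d^r)}{d^{t+1}-1}.
\]
Subtracting the two representations shows
\[
    |S| \;\geq\; L + \frac{(d-1)d^r}{d^{t+1}-1}.
\]

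Because $d\ge 2$, $t\ge 1$, and $0\le r\le t$, one checks $(d-1)d^r \le d^{t+1}-d^t < d^{t+1}-1$, so the added fraction is strictly between $0$ and $1$. Since $|S|$ is an integer, it follows that $|S|\ge L+1$ in every instance. This is exactly the bound $L+1$ claimed when $n\equiv t\pmod{t+1}$, and it \emph{a fortiori} implies the weaker bound $|S|\ge L$ stated in the remaining case.

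The only non-routine step is spotting the telescoping identity that converts the geometric-series formula into $(d-1)(d^n-d^r)/(d^{t+1}-1)$; once it is recognized, the lower bound reduces to the standard sphere-covering count together with the integrality of $|S|$, and requires no graph-theoretic ingredient beyond Lemma \ref{lem:distance}.
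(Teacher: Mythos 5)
Your proof is correct, but it takes a genuinely different route from the paper's. The paper argues by \emph{packing}: it exhibits an explicit family $A$ of vertices of the form $v'\oplus b\oplus 0^{j(t+1)-1}$ (with $b\neq 0$) whose in-balls $B_t^-$ are pairwise disjoint, so each member of $A$ requires its own dominator, and the extra $+1$ in the case $n\equiv t\pmod{t+1}$ comes from checking separately that $0^n$ cannot be dominated by any vertex that dominates a member of $A$. You instead argue by \emph{covering}: every vertex lies in some out-ball $B_t^+(y)$ with $y\in S$, each out-ball has at most $(d^{t+1}-1)/(d-1)$ vertices, and the volume bound $|S|\ge d^n(d-1)/(d^{t+1}-1)$ combined with your telescoping identity and the integrality of $L$ gives $|S|\ge L+1$. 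Both arguments are sound, but yours is actually sharper: for $t\ge 1$ the fraction $(d-1)d^r/(d^{t+1}-1)$ is strictly between $0$ and $1$ for every $r\in[0,t]$, so you obtain the $+1$ unconditionally rather than only when $n\equiv t\pmod{t+1}$. Since the paper's preceding theorem constructs a $t$-dominating set of size exactly $L+1$, your bound in fact pins down $\gamma_t(\vec{\mathcal{B}}(d,n))=L+1$ in all cases, consistent with the known exact value $\lceil d^n/(d+1)\rceil$ at $t=1$. The trade-off is that the paper's packing argument is local and names the specific vertices forcing the bound, while yours is a global count; here the global count happens to win.
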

\begin{proof}
    Suppose that the set $S'$ is a $t$-dominating set in $\vec{\mathcal{B}}(d,n)$.  Choose $a \in \A_d \setminus \{0\}$ and $i \in \Z$ so that $i \leq \frac{n}{t+1}$, and $w \in \A_d^{n-i(t+1)}$.  Let $x=w \oplus a \oplus 0^{i(t+1)-1}$.  We note that $B_t^-(x)$ contains the following elements. $$B_t^-(x) = \left\{y \mid y=w' \oplus w \oplus a \oplus 0^{i(t+1)-1-k} \hbox{ for some } k \in [0,t], w' \in \A_d^k\right\}$$  Note for all $v \neq x$ such that $v=v' \oplus b \oplus 0^{j(t+1)-1}$ with $b \in \A_d \setminus \{0\}$, $j \leq \frac{n}{t+1}$, and $v' \in \A_d^{n-j(t+1)}$, we must have that $B_t^-(x) \cap B_t^-(v) = \emptyset$.  Hence each of these types of strings must dominated by a different element of $S'$, and so we must have the following lower bound on $|S'|$.  Define $A= \left\{v \mid v= v'\oplus b \oplus 0^{j(t+1)-1} \hbox{ with } b \in \A_d\setminus {0}, j \leq \frac{n}{t+1}, v' \in \A_d^{n-j(t+1)} \right\}$.
    \begin{eqnarray*}
        |S'| & \geq & \left| A \right| \\
        & = & \sum_{j=1}^{\lfloor \frac{n}{t+1} \rfloor} d^{n-j(t+1)}\cdot (d-1) \\
        & = & d^{n-t-1}(d-1)\left(\frac{1-d^{-(t+1)\left\lfloor \frac{n}{t+1}\right\rfloor}}{1-d^{-(t+1)}}\right)
    \end{eqnarray*}

    Finally, we consider the string $0^n$ and note that $$B_t^-(0^n)=\{z \mid z=z' \oplus 0^{n-s} \hbox{ with } z' \in \A_d^s, s \leq t\}.$$  When we compare $B_t^-(0^n)$ with $B_t^-(x)$, we note that since $a \neq 0$ we must have that the closest element of $B_t^-(x)$ to $0^n$ is $x$ itself.  Next we note that the string closest to $0^n$ in the set $A$ will occur when $j=\lfloor \frac{n}{t+1} \rfloor$. This will give us the string with the most 0's packed at the right end.  Finally, if $n \equiv p \bmod {t+1}$, then this string looks like $v' \oplus b \oplus 0^{n-p-1}$ with $v' \in \A_d^p$ and $b \neq 0$.  If $p=t$, then we are still unable to reach $0^n$, and so we must have at least one additional string in $S'$ to cover $0^n$.
\end{proof}

\subsection{Resolving Sets}\label{Resolving}

\begin{defn}
     A \textbf{directed resolving set} is a set $S$ such that for all $u,v\in V(G)$ there exist $s\in S$ so that $\vec d(s,u)\ne \vec d(s,v)$.  The \textbf{directed metric dimension} is the minimum size of a directed resolving set.  An example of a directed resolving set in $\vec{\mathcal{B}}(2,3)$ is given in Figure \ref{exResSet}.
\end{defn}

Note that this definition is not quite the same as that given in \cite{DirResolving} (which requires that there exist $s\in S$ so that $d(u,s) \ne d(v, s)$).  Our definition corresponds better to the definitions of domination and of identifying codes for directed graphs  that are used in this paper, however results using the standard definition may be found in \cite{Feng}.

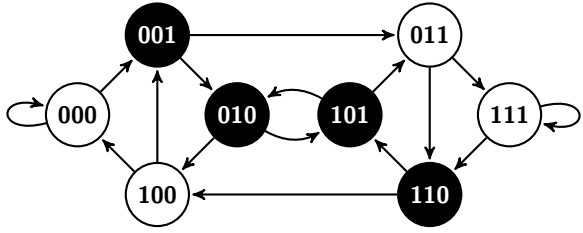
\begin{figure}
\begin{center}

\begin{tikzpicture}[->,>=stealth',shorten >=1pt,auto,node distance=2cm,
  thick,main node/.style={circle,draw,font=\sffamily\bfseries,scale=0.75},new node/.style={circle,fill=black,text=white,draw,font=\sffamily\bfseries,scale=0.75}]

  \node[main node] (0) {000};
  \node[new node]  (1) [above right of=0] {001};
  \node[new node] (2) [below right of=1] {010};
  \node[main node]  (4) [below right of=0] {100};
  \node[new node] (5) [right of=2]       {101};
  \node[new node]  (6) [below right of=5] {110};
  \node[main node]  (3) [above right of=5] {011};
  \node[main node] (7) [below right of=3] {111};

  \path[every node/.style={font=\sffamily\small}]
    (0) edge node [left]      {} (1)
        edge [loop left] node {} (0)
    (1) edge node [left]      {} (3)
        edge node [right]     {} (2)
    (2) edge [bend right] node{} (5)
        edge node [right]     {} (4)
    (3) edge node [right]     {} (6)
        edge node [right]     {} (7)
    (4) edge node [left]      {} (0)
        edge node [right]     {} (1)
    (5) edge [bend right] node{} (2)
        edge node [right]     {} (3)
    (6) edge node [right]     {} (5)
        edge node [right]     {} (4)
    (7) edge [loop right] node{} (7)
        edge node [right]     {} (6);

\end{tikzpicture}

\end{center}
\caption{A directed resolving set in the graph $\vec{\mathcal{B}}(2,3)$ (black vertices).}  \label{exResSet}
\end{figure}

\begin{thm} The directed metric dimension for $\B(d,n)$ is $d^{n-1}(d-1)$.  \end{thm}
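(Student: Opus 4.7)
The plan is to prove matching lower and upper bounds.

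For the lower bound, I would invoke Lemma~\ref{lem:samenbrhd}: any two vertices $x \neq y$ sharing a common prefix $x(1:n-1)=y(1:n-1)$ satisfy $\vec{d}(v,x) = \vec{d}(v,y)$ for every $v \notin \{x,y\}$. Hence a directed resolving set $S$ must contain at least one of $x,y$; equivalently, for each prefix $w \in \A^{n-1}$, at most one of the $d$ vertices $\{w \oplus a : a \in \A\}$ lies outside $S$. Summing over the $d^{n-1}$ prefixes gives $|S| \geq d^{n-1}(d-1)$.

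For the upper bound I would exhibit the set $S = \{x \in \A_d^n : x_n \neq 0\}$, which has exactly $d^n - d^{n-1} = d^{n-1}(d-1)$ elements, and verify that it resolves. Pick distinct $x,y \in \A_d^n$ and let $i$ be the smallest index where they differ. If $i = n$, then $x$ and $y$ share a prefix and differ only in the last letter; since $x_n \neq y_n$, at least one of them (say $x$) has nonzero last letter and so $x \in S$, and then $\vec{d}(x,x) = 0 \neq \vec{d}(x,y)$ separates the pair.

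If $i < n$, swap the labels if necessary so that $x_i \neq 0$; this is possible because $x_i \neq y_i$ forces at least one of these two letters to be nonzero. By Lemma~\ref{lem:distance}, the in-ball $B_{n-i}^-(x) = \{s \in \A_d^n : s(n-i+1:n) = x(1:i)\}$ is nonempty (it contains $d^{n-i}$ strings); every $s$ in it satisfies $s_n = x_i \neq 0$ and therefore lies in $S$; and since $x(1:i) \neq y(1:i)$, this ball is disjoint from $B_{n-i}^-(y)$. Any such $s$ therefore yields $\vec{d}(s,x) \leq n-i < \vec{d}(s,y)$ and separates $x$ and $y$.

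The key observation that drives the construction is that membership in $B_{n-i}^-(x)$ forces the last coordinate $s_n = x_i$, so choosing $S$ to be the vertices with nonzero final coordinate synchronizes the ``missing vertex per prefix'' (which is what the lower bound allowed us to exclude) with the last-coordinate structure of in-balls. Arranging via relabeling for $x_i \neq 0$ then automatically puts the natural separators inside $S$, so the only potentially delicate point---showing that at least one distinguisher for each different-prefix pair survives in $S$---reduces to a single line.
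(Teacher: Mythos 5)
Your lower bound is exactly the paper's: Lemma~\ref{lem:samenbrhd} (vertices sharing a prefix are equidistant from every third vertex) forces a resolving set to contain all but at most one vertex from each prefix class, giving $|S|\ge d^{n-1}(d-1)$. For the upper bound the paper merely asserts that a witness set ``can easily be shown'' to resolve (and the set it names, $\{w\oplus 0 \mid w\in\A^{n-1}\}$, has size $d^{n-1}$ and is evidently a slip for its complement, which is precisely your $S=\{x : x_n\neq 0\}$), so supplying an actual verification is worthwhile. However, one step of your verification is false as written. In the case $i<n$ you claim the suffix class $\{s : s(n-i+1:n)=x(1:i)\}$ is disjoint from $B_{n-i}^-(y)$, and hence that $\vec d(s,x)\le n-i<\vec d(s,y)$. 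Counterexample: $n=4$, $x=1000$, $y=2001$, $i=1$. Then $s=y=2001$ belongs to the suffix class (its last letter equals $x_1=1$), yet $\vec d(s,y)=0\le n-i$; neither the disjointness nor the strict inequality holds. The source of the error is that membership of $s$ in $B_{n-i}^-(y)$ can be witnessed by a path of length strictly less than $n-i$, which constrains a different block of letters of $s$ than the one you pinned down.

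The conclusion you need --- that every $s$ in the suffix class separates $x$ from $y$ --- is nonetheless true, by a simpler argument: if $\vec d(s,x)=\vec d(s,y)=t$, then Lemma~\ref{lem:distance} gives $x(1:n-t)=s(t+1:n)=y(1:n-t)$, so $x$ and $y$ agree in their first $n-t$ letters and hence $n-t<i$, i.e.\ $t>n-i$; this contradicts $\vec d(s,x)\le n-i$. With that substitution your proof is complete, and it in fact does more than the paper's, which leaves the upper-bound witness unverified (and misstated).
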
\label{thm:dirmetdim}

\begin{proof} The following shows that for each $w\in {\cal A}^{n-1}$ a directed resolving set for $\B(d,n)$ must contain (at least) all but one of the vertices with prefix $w$. Suppose that $w\in \A^{n-1}$, and $i\ne j \in \A$ so that neither of $w\oplus i, w\oplus j$ is in our set $S$.  Note that if $x,y\in V(\B(d,n))$, with $x\ne y$, then the distance from $x$ to $y$ is completely determined by $x^-$ (and $y^+$).  Since neither $w\oplus i$ nor $w\oplus j$ is in $S$, and both have the same prefix,  $d(w\oplus i, x)=d(w\oplus j, x)$ for all $x\in S$.  Thus $S$ is not a directed resolving set. Thus for every $w\in \A^{n-1}$, $S$ must contain (at least) all but one of the strings $w\oplus j$ for $j\in \A$.  Thus $|S|\geq d^{n-1}(d-1)$.  Since $\{w\oplus 0 \ | \ w\in \A^{n-1}\}$ can easily be shown to be a directed resolving set, we have the desired equality. \end{proof}

The combination of the previous theorem and Theorem  \ref{DirIC} yields:

\begin{cor}  The directed metric dimension for $\B(d,n)$ is equal to the minimum size of a $t$-identifying code for $\B(d,n)$ if $2t \leq n$. \end{cor}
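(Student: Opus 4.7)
The plan is to prove the corollary by showing that both quantities equal $d^{n-1}(d-1)$ under the hypothesis $2t \leq n$, and are therefore equal to one another. This is essentially a bookkeeping combination of three results already established in the paper, so I do not expect any real obstacle — the task is just to check that the hypotheses line up.

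First I would invoke Theorem \ref{thm:metdim} to get that the directed metric dimension of $\B(d,n)$ is exactly $d^{n-1}(d-1)$. This holds without any relation between $n$ and $t$, so it contributes one side of the equality unconditionally.

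Next I would assemble the matching value for the minimum $t$-identifying code. The lower bound is Theorem \ref{DirIC}: any $t$-identifying code (when one exists) has size at least $d^{n-1}(d-1)$. The matching upper bound comes from the explicit constructions of the previous section: for $t \geq 2$ and $n \geq 2t$, Theorem \ref{MPMain} (with Theorem \ref{thm:2ident} handling the $t=2$ edge cases, and Theorem \ref{MPT10} handling $t=1$) produces a $t$-identifying code of size exactly $d^{n-1}(d-1)$. Since the hypothesis $2t \leq n$ ensures $n \geq 2t$, one of these constructions always applies, so $\B(d,n)$ is $t$-identifiable and its minimum $t$-identifying code has size exactly $d^{n-1}(d-1)$.

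Combining the two computed values gives the desired equality. The only delicate point is just confirming that the constructive upper bound covers all $(d,n,t)$ with $2t \leq n$ (including the small-$t$ and parity cases of $n$), which the three cited theorems collectively do; no new argument is required.
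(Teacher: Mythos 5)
Your proposal is correct and takes essentially the same route as the paper, which disposes of the corollary in one line by combining Theorem~\ref{thm:metdim} with Theorem~\ref{DirIC}; in fact you are more careful than the paper, since the paper cites only the metric-dimension value and the lower bound on identifying codes, leaving implicit the constructive upper bound from Theorems~\ref{MPMain}, \ref{thm:2ident}, and~\ref{MPT10} that you correctly supply. The one caveat is that those constructions all require $n\geq 3$, so the case $t=1$, $n=2$ allowed by the hypothesis $2t\leq n$ is not actually covered (and for $d=2$ the statement genuinely fails there: $\vec{\mathcal{B}}(2,2)$ has directed metric dimension $2$ but its minimum $1$-identifying code has size $3$) --- a gap your write-up shares with the corollary as stated in the paper, so it should be noted rather than asserted away as you do in your final sentence.
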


%Whereas the proofs of these theorems yields:
%
%\begin{cor} The set $\{w\oplus 0 \ | \ w\in \A^{n-1}\}$ is both a minimum size directed resolving set and a minimum size identifying code for $\B(d,n)$.   \end{cor}

\subsection{Determining Sets}\label{Determining}

In this section we will use a determining set to help us illustrate the automorphism group of $\B(d,2)$, study the relationship between $\aut(\B(d,n-1))$ and $\aut(\B(d,n))$ and use the result to find the determining number for each $\B(d,n)$. First let's recall some definitions.

\begin{defn}
    An \textbf{automorphism} of a graph $G$ is a permutation $\pi$ of the vertex set such that for all pairs of vertices $v,w \in V(G)$, $vw$ is an edge between $v$ and $w$ if and only if $\pi(v)\pi(w)$ is an edge between $\pi(v)$ and $\pi(w)$.  An \textbf{automorphism} of a \textit{directed} graph $G$ is a permutation $\pi$ of the vertex set such that for all pairs of vertices $v,w \in V(G)$, $vw$ is an edge from $v$ to $w$ if and only if $\pi(v)\pi(w)$ is an edge from $\pi(v)$ to $\pi(w)$. One automorphism in the binary (directed or undirected) de~Bruijn graph is a map that sends each string to its complement.
\end{defn}

\begin{defn} \cite{B1}
    A \textbf{determining set}  for $G$ is a set $S$ of vertices of $G$ with the property that the only automorphism that fixes  $S$ pointwise is the trivial automorphism.  The \textbf{determining number} of $G$, denoted $\Det(G)$  is the minimum size of a determining set for $G$.  See Figure \ref{exDetSet} for an example.
\end{defn}

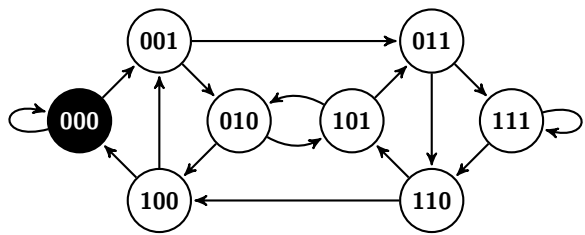
\begin{figure}
\begin{center}

\begin{tikzpicture}[->,>=stealth',shorten >=1pt,auto,node distance=2cm,
  thick,main node/.style={circle,draw,font=\sffamily\bfseries,scale=0.75},new node/.style={circle,fill=black,text=white,draw,font=\sffamily\bfseries,scale=0.75}]

  \node[new node] (0) {000};
  \node[main node]  (1) [above right of=0] {001};
  \node[main node] (2) [below right of=1] {010};
  \node[main node]  (4) [below right of=0] {100};
  \node[main node] (5) [right of=2]       {101};
  \node[main node]  (6) [below right of=5] {110};
  \node[main node]  (3) [above right of=5] {011};
  \node[main node] (7) [below right of=3] {111};

  \path[every node/.style={font=\sffamily\small}]
    (0) edge node [left]      {} (1)
        edge [loop left] node {} (0)
    (1) edge node [left]      {} (3)
        edge node [right]     {} (2)
    (2) edge [bend right] node{} (5)
        edge node [right]     {} (4)
    (3) edge node [right]     {} (6)
        edge node [right]     {} (7)
    (4) edge node [left]      {} (0)
        edge node [right]     {} (1)
    (5) edge [bend right] node{} (2)
        edge node [right]     {} (3)
    (6) edge node [right]     {} (5)
        edge node [right]     {} (4)
    (7) edge [loop right] node{} (7)
        edge node [right]     {} (6);

\end{tikzpicture}

\end{center}
\caption{A minimum size determining set for $\vec{\mathcal{B}}(2,3)$ (black vertex).}  \label{exDetSet}
\end{figure}

Note that an alternate definition for a determining set is a set $S$ with the property that whenever $f,g\in {\rm Aut}(G)$ so that $f(s)=g(s)$ for all $s\in S$, then $f(v)=g(v)$ for all $v\in V(G)$.  That is, every automorphism is completely determined by its action on a determining set.\medskip

Notice that since for both directed resolving sets and for identifying codes, since each vertex in a graph is {\em uniquely} identified by its relationship to the subset by properties preserved by automorphisms, the subset it also a determining set.  Thus every directed resolving set and every identifying code is a determining set.  However, though domination is preserved by automorphisms, vertices are not necessarily uniquely identifiable by their relationship to a dominating set.  Thus a dominating set is not necessarily a determining set.   However, the relationships above mean that the size of a minimum determining set must be at most the size of a minimum identifying code or the directed metric dimension.  For de~Bruijn graphs we have shown that the latter numbers are rather large.  Does this mean that the determining number is also large?  We will see in Corollary \ref{cor:detnum} that the answer for directed de~Bruijn graphs is a resounding \lq No'.

\begin{lem} $S=\{00, 11, 22, 33, \ldots, (d-1)(d-1)\}$ is a determining set for $\vec{\mathcal{B}}(d,2)$.  \end{lem}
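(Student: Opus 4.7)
The plan is to characterize each vertex of $\vec{\mathcal{B}}(d,2)$ by its adjacency relations to the loop-vertices in $S$, and then observe that an automorphism fixing $S$ pointwise must preserve these characterizations, forcing it to be the identity.

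First I would note that the vertices of $S$ are precisely the vertices that carry a self-loop: for any $i \in \A_d$, vertex $ii$ has an arc to itself since its suffix $i$ equals its prefix $i$. Next I would compute, for each pair $i,j \in \A_d$, the out-neighborhood $N^+(ii)$ and the in-neighborhood $N^-(jj)$. By the definition of $\vec{\mathcal{B}}(d,2)$, $N^+(ii) = \{i0, i1, \ldots, i(d-1)\}$ (strings with prefix $i$) and $N^-(jj) = \{0j, 1j, \ldots, (d-1)j\}$ (strings with suffix $j$). Their intersection is the single vertex $\{ij\}$.

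Now let $\pi$ be any automorphism of $\vec{\mathcal{B}}(d,2)$ which fixes $S$ pointwise. Since automorphisms of a directed graph preserve out- and in-neighborhoods, for any $i,j \in \A_d$ the image $\pi(ij)$ must lie in $N^+(\pi(ii)) \cap N^-(\pi(jj)) = N^+(ii) \cap N^-(jj) = \{ij\}$. Hence $\pi(ij) = ij$ for every pair $i,j$, including the case $i=j$ which is given. Therefore $\pi$ is the identity automorphism, and $S$ is a determining set.

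There is no significant obstacle here — the argument reduces immediately to the observation that a length-two string is pinned down by its first and last letters, each of which is recorded by an adjacency to a distinct loop-vertex. The content of the lemma is really just the remark that the $d$ loops of $\vec{\mathcal{B}}(d,2)$ suffice to resolve all remaining vertices under the group action.
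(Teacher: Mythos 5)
Your proof is correct and takes essentially the same approach as the paper: both arguments pin down the vertex $ij$ by its out-adjacency from the fixed loop $ii$ and its in-adjacency to the fixed loop $jj$ (the paper phrases this via directed distances $1$ versus $2$, you via the singleton intersection $N^+(ii)\cap N^-(jj)=\{ij\}$, which is the same structural fact).
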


\begin{proof}  Suppose that $\sigma \in \aut(\B(d,2))$ fixes $S$ pointwise.  That is, $\sigma(ii) = ii$ for all $i\in \A$.   Choose $ij\ne rs \in V(\B(d,2))$.  Then either $i\ne r$ or $j\ne s$ (or both).  If $i\ne r$ then $\vec d(ii,rs) = 2$ which is distinct from $\vec d(ii, ij)=1$. Since an automorphism of a directed graph must preserve directed distance, $\sigma(ij)\ne rs$ if $i\ne r$. If $j\ne s$, then $\vec d(rs,jj) =2$ which is distinct from $\vec d(ij, jj)=1$.  Thus, again using that $\sigma$ preserves directed distance, $\sigma(ij)\ne rs$ if $j\ne s$.  Thus, $\sigma(ij) = ij$ for all $ij\in V(\B(d,2))$ and therefore $\sigma$ is the identity map and $S$ is a determining set.\end{proof}

Note that we are using directed distances both from and to elements of the set $S$.  Thus $S$ does not fit the definition of a directed resolving set  for $\vec{\mathcal{B}}(d,2)$ (by \cite{DirResolving}, this would require that each vertex $v\in V(G)$ be distinguished by it directed distance  to the vertices of the resolving set).  However directed distances both to and from a set can be used in determining automorphisms of a directed graph.

For the following results, we recall from group theory that $\sym (X)$ is the symmetric group on a set $X$.

\begin{lem}\label{autBd2}  $\aut(\vec{\mathcal{B}}(d,2)) \cong \sym(\A_d)$.\end{lem}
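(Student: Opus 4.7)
The plan is to exhibit an explicit group homomorphism $\Phi \colon \sym(\A_d) \to \aut(\vec{\mathcal B}(d,2))$ and then use the preceding determining-set lemma to show it is a bijection.

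First I would define $\Phi$ on a permutation $\pi \in \sym(\A_d)$ by $\Phi(\pi)(ij) = \pi(i)\pi(j)$. A quick check shows this is a graph automorphism: the directed edges of $\vec{\mathcal B}(d,2)$ are exactly the pairs $(ij, jk)$ for $i,j,k \in \A_d$, and $\Phi(\pi)$ sends such a pair to $(\pi(i)\pi(j), \pi(j)\pi(k))$, which is again an edge. Composition is clearly preserved, so $\Phi$ is a group homomorphism. Injectivity is immediate: if $\Phi(\pi)$ is the identity, then in particular $\pi(i)\pi(i) = ii$ for every $i$, forcing $\pi$ to be the identity permutation.

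The main content is surjectivity, and here I would use the fact that the only vertices of $\vec{\mathcal B}(d,2)$ carrying a self-loop are precisely the elements of $S = \{00, 11, \ldots, (d-1)(d-1)\}$, since the edge from $ij$ to itself exists iff $j = i$ and $i = j$. Any automorphism $\sigma$ of a directed graph must send loops to loops, so $\sigma$ permutes $S$. Hence there is a unique permutation $\pi \in \sym(\A_d)$ with $\sigma(ii) = \pi(i)\pi(i)$ for every $i \in \A_d$. By construction $\sigma$ and $\Phi(\pi)$ agree on $S$, so $\Phi(\pi)^{-1} \circ \sigma$ is an automorphism fixing $S$ pointwise. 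Applying the preceding lemma (which establishes that $S$ is a determining set for $\vec{\mathcal B}(d,2)$), this composition must be the identity, giving $\sigma = \Phi(\pi)$.

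The only mild subtlety is confirming that the loop vertices are exactly $S$ and that automorphisms of a directed graph must preserve loops, but both are immediate from the definitions. I expect no real obstacle: once $\Phi$ is written down and the self-loop observation is made, the determining-set lemma does essentially all the work and yields the isomorphism $\aut(\vec{\mathcal B}(d,2)) \cong \sym(\A_d)$.
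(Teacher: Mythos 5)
Your proposal is correct and follows essentially the same route as the paper: the coordinate-wise action of $\sym(\A_d)$ gives the embedding one way, and the observation that the looped vertices are exactly $\{00,11,\ldots,(d-1)(d-1)\}$ combined with the preceding determining-set lemma gives the other. If anything, your use of the determining-set lemma to establish surjectivity directly is slightly more explicit than the paper's argument, which concludes from a pair of injections between finite groups.
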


\begin{proof} Let $\sigma\in \sym(\A_d)$.  Define $\varphi_\sigma$ on $V(\vec{\mathcal{B}}(d,n))$ by applying $\sigma$ to each vertex coordinate-wise. That is $\varphi_\sigma(ab) =\sigma(a)\sigma(b)$.  It is easy to show that $\varphi_\sigma$ preserves directed edges and thus is an automorphism.   Further, distinct permutations in $\sym(\A_d)$ produce distinct automorphisms since they act differently on the vertices of the determining set $S$ defined above.  Thus we have an injection $\sym(\A_d) \hookrightarrow \aut(\vec{\mathcal{B}}(d,2))$.\medskip

Since the vertices of $S$ are precisely the vertices with loops, every automorphism of $\B(d,2)$, must preserve $S$ setwise.  This provides the necessary injection from $\aut(\B(d,2)) \hookrightarrow \sym(\A)$.  Thus,  $\aut(\vec{\mathcal{B}}(d,2)) \cong \sym(\A_d)$.\end{proof}

Note that we can consider the automorphisms of $\B(d,2)$ as permutations of the loops, but we can simultaneously consider them as permutations of the symbols in the alphabet $\A_d$. It can be useful to view the automorphisms in these two different ways.\medskip

Additionally, as shown in \cite{Araki}, $\vec{\mathcal{B}}(d,n)$ can be built inductively from $\vec{\mathcal{B}}(d,n-1)$ in the following way.  The vertex $v_1\ldots v_n\in V(\vec{\mathcal{B}}(d,n))$ corresponds to the directed edge from $v_1\ldots v_{n-1}$ to $v_2\ldots v_n$ in $\B(d,n-1)$.  The directed edge $\vec{\mathcal{B}}(d,n)$ from $v_1\ldots v_n \to v_2 \ldots v_n v_{n+1}$  corresponds to the directed 2-path $v_1\ldots v_{n-1} \to v_2\ldots v_n \to v_3\ldots v_{n+1}$ in $\B(d,n-1)$.  That is, $\vec{\mathcal{B}}(d,n)$ is the directed line graph of the directed graph $\vec{\mathcal{B}}(d,n-1)$. Thus, by \cite{Handbook} (Chapter 27, Section 1.1),  $\aut(\B(d,n-1)) = \aut(\B(d,n))$ $\cong$ $\sym(\A)$.  In the following paragraphs we see detail this correspondence.\medskip

Suppose that $\varphi\in \aut(\vec{\mathcal{B}}(d,n))$.  Since $\varphi$ preserves directed edges, we know that both $\varphi(v_1\ldots v_n)=a_1\ldots a_n$
and $\varphi(v_2\ldots v_{n+1}) = b_1\ldots b_n$ if and only if $a_2=b_1, \ldots, a_{n}=b_{n-1}$.  Thus if $\varphi(v_1\ldots v_{n-1} v_n) = a_1\ldots a_{n-1} a_n$ then for every $b\in \A$, $\varphi(v_1\ldots v_{n-1} z) = a_1\ldots a_{n-1} c$ for some $c\in \A$.  In particular, this allow us to define an automorphism $\varphi'\in \aut(\B(d,n-1))$ corresponding to $\varphi\in \aut(\B(d,n))$.  Define $\varphi'$ by $\varphi'(v_1\ldots v_{n-1}) = a_1\ldots a_{n-1}$ where $\varphi(v_1\ldots v_n) = a_1\ldots a_{n-1}$.  By the preceding discussion, $\varphi'$ is well-defined.  It is also clearly a bijection on vertices of $\B(d, n-1)$.  Consider $v_1\ldots v_{n-1}$ and $v_2\ldots v_{n-1}v_n$, the initial and terminal vertices of a directed edge in $\B(d,n-1)$. Since $\varphi$ preserves directed edges  if $\varphi(v_1\ldots v_n) = a_1\ldots a_{n-1} a_n$ then for any $p\in \A$,  $\varphi(v_2\ldots v_n p) = a_2\ldots a_{n} q$ for some $q\in \A$. By definition of $\varphi'$,$ \varphi'(v_1,\ldots v_{n-1}) = a_1\ldots a_{n-1}$ and $\varphi'(v_2\ldots v_n) = a_2\ldots a_n$.  Thus $\varphi'$ preserves the directed edge. Thus we get $\aut(\vec{\mathcal{B}}(d,n))\hookrightarrow \aut(\vec{\mathcal{B}}(d,n-1))$.\medskip

In the other direction, suppose we are given $\varphi'\in \aut(\vec{\mathcal{B}}(d,n-1))$.  Since $\varphi'$ preserves directed edges, and directed edges of $\B(d, n-1)$ are precisely the vertices of $\B(d,n)$, $\varphi'$ defines a map on vertices of $\B(d, n)$.  That is, (with some abuse of notation)
\begin{eqnarray*}
\varphi(v_1\ldots v_n) &=& \varphi(v_1\ldots v_{n-1} \to  v_2\ldots v_n) \\
    &=&  \varphi'(v_1\ldots v_{n-1} \to v_2\ldots v_n) \\
    &=&  \varphi'(v_1\ldots v_{n-1}) \to \varphi'(v_2\ldots v_n).
 \end{eqnarray*}
Thus, given $\varphi'(v_1\ldots v_{n-1})=a_1\ldots a_{n-1}$ then $\varphi'(v_2\ldots v_n) = a_2\ldots a_n$ for some $a_n\in \A$ and we define $\varphi(v_1\ldots v_n)$ $=$ $a_1\ldots a_n$.  Further, since $\varphi'$ preserves directed 2-paths, $\varphi$ preserves directed edges. Thus we get $$\aut(\vec{\mathcal{B}}(d,n-1)) \hookrightarrow \aut(\vec{\mathcal{B}}(d,n)).$$

Since the automorphisms of $\B(d,2)$ are permutations of the loops, and of the symbols of $\A$, by induction, so are the automorphisms of $\B(d,n)$ for all $n$.  Thus we have proved the following.

\begin{thm}  $\aut(\vec{\mathcal{B}}(d,n)) \cong \sym(\A_d)$ for all $n\geq 2$. \end{thm}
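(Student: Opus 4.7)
The plan is to proceed by induction on $n$, using the preceding discussion as the inductive step. The base case $n=2$ is precisely Lemma \ref{autBd2}, which has already been proved via the determining set $S=\{ii \mid i\in \A_d\}$ of loop-vertices together with the coordinate-wise action of $\sym(\A_d)$.

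For the inductive step, suppose that $\aut(\vec{\mathcal{B}}(d,n-1))\cong \sym(\A_d)$. The two paragraphs immediately preceding the theorem construct maps in both directions. On one hand, given $\varphi\in\aut(\vec{\mathcal{B}}(d,n))$, the fact that $\varphi$ preserves directed edges forces $\varphi(x_1\ldots x_{n-1}z)$ and $\varphi(x_1\ldots x_{n-1}z')$ to share an $(n-1)$-letter prefix for all $z,z'\in\A_d$, so the assignment $\varphi'(x_1\ldots x_{n-1})=$ that common prefix is a well-defined automorphism of $\vec{\mathcal{B}}(d,n-1)$. On the other hand, given $\varphi'\in\aut(\vec{\mathcal{B}}(d,n-1))$, interpreting $\vec{\mathcal{B}}(d,n)$ as the directed line graph of $\vec{\mathcal{B}}(d,n-1)$ (so that a vertex $x_1\ldots x_n$ is the edge $x_1\ldots x_{n-1}\to x_2\ldots x_n$) lets us lift $\varphi'$ to an automorphism $\varphi$ of $\vec{\mathcal{B}}(d,n)$.

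The remaining step is to check that these two assignments are mutual inverses and respect composition, so together they yield a group isomorphism $\aut(\vec{\mathcal{B}}(d,n))\cong \aut(\vec{\mathcal{B}}(d,n-1))$. This is routine coordinate-bookkeeping: starting from $\varphi$, reducing to $\varphi'$, and then lifting back, at each vertex $x_1\ldots x_n$ we recover exactly $\varphi(x_1\ldots x_n)$ because the lift is forced by the value of $\varphi'$ on the prefix and suffix of length $n-1$. Combining this isomorphism with the inductive hypothesis yields $\aut(\vec{\mathcal{B}}(d,n))\cong \sym(\A_d)$, completing the induction.

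The main obstacle is making the line-graph correspondence rigorous enough to see that the two lifts are mutually inverse group homomorphisms, rather than merely set bijections; everything else is a bookkeeping verification that each construction preserves directed adjacency. Once those verifications are in hand, the theorem follows immediately from induction on $n$, with Lemma \ref{autBd2} as the base case.
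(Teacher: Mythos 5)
Your proposal is correct and follows essentially the same route as the paper: the paper also takes Lemma \ref{autBd2} as the base case and uses the directed-line-graph correspondence between $\vec{\mathcal{B}}(d,n-1)$ and $\vec{\mathcal{B}}(d,n)$ to pass automorphisms back and forth, concluding by induction. The only difference is one of emphasis: the paper presents the two injections and leans on a cited line-graph result rather than explicitly verifying that the two lifts are mutually inverse homomorphisms, which is the bookkeeping step you correctly flag as the part needing care.
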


\begin{cor}\label{cor:detnum} $\Det(\B(d,n))= \left\lceil \frac{d-1}{n}\right\rceil$. \end{cor}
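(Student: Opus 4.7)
The plan is to leverage the theorem just proved, $\aut(\vec{\mathcal{B}}(d,n)) \cong \sym(\A_d)$, in which the isomorphism sends a permutation $\sigma \in \sym(\A_d)$ to the automorphism $\varphi_\sigma$ acting coordinate-wise on strings, $\varphi_\sigma(x_1 \ldots x_n) = \sigma(x_1) \ldots \sigma(x_n)$. Under this action, $\varphi_\sigma$ fixes a vertex $x_1 \ldots x_n$ pointwise if and only if $\sigma$ fixes each of the alphabet symbols $x_1, \ldots, x_n$. Consequently, for any subset $S \subseteq V(\vec{\mathcal{B}}(d,n))$, the pointwise stabilizer of $S$ in $\aut(\vec{\mathcal{B}}(d,n))$ corresponds exactly to the subgroup of $\sym(\A_d)$ that pointwise fixes the letter set $L(S) := \{x_i : x_1 \ldots x_n \in S, \ 1 \leq i \leq n\}$.

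The key reduction is therefore: $S$ is a determining set if and only if the only $\sigma \in \sym(\A_d)$ fixing $L(S)$ pointwise is the identity. This happens precisely when $|L(S)| \geq d-1$; if instead $|L(S)| \leq d-2$, at least two letters of $\A_d$ lie outside $L(S)$ and their transposition gives a nontrivial fixing permutation, while if $|L(S)| \geq d-1$ then any bijection of $\A_d$ fixing $d-1$ of its $d$ elements must also fix the remaining one. Since each string in $S$ contributes at most $n$ distinct symbols to $L(S)$, we have $|L(S)| \leq n\,|S|$, which forces the lower bound $|S| \geq \left\lceil \frac{d-1}{n}\right\rceil$.

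For the matching upper bound, I would partition the symbols $\{0, 1, \ldots, d-2\}$ into $\left\lceil \frac{d-1}{n}\right\rceil$ blocks, each of size at most $n$, and for each block construct a single length-$n$ string that uses every symbol of that block (padding with repeated symbols from the block to reach length $n$). The resulting collection $S$ has cardinality $\left\lceil \frac{d-1}{n}\right\rceil$ and satisfies $L(S) = \{0,1,\ldots,d-2\}$, so $|L(S)| = d-1$ and $S$ is a determining set by the characterization above. Combining the two bounds yields $\Det(\vec{\mathcal{B}}(d,n)) = \left\lceil \frac{d-1}{n}\right\rceil$.

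I do not anticipate any serious obstacle here: the whole argument rests on the observation that, once automorphisms are identified with alphabet permutations, \emph{fixing a vertex is the same as fixing all of its letters}, after which the problem collapses to a covering count on $\A_d$. The only mild subtlety is the \textquotedblleft$d-1$ suffices\textquotedblright{} half of the characterization, which uses that a bijection of a finite set fixing all but one element must fix that element too.
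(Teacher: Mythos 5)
Your proposal is correct and follows essentially the same route as the paper: both reduce the problem, via $\aut(\vec{\mathcal{B}}(d,n)) \cong \sym(\A_d)$ acting letter-wise, to the observation that $S$ is determining precisely when the letters appearing in $S$ cover at least $d-1$ symbols of $\A_d$, giving the lower bound from $|L(S)| \leq n|S|$ and the upper bound from an explicit covering by $\left\lceil \frac{d-1}{n}\right\rceil$ strings. Your block-partition construction is just a slightly more explicit version of the paper's ``minimum set of vertices in which each letter of $\A_{d-1}$ occurs.''
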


\begin{proof} Let $S$ be a minimum set of vertices in which each letter of $\A_{d-1}$ occurs at least once. It is easy to see that $|S|=\left\lceil \frac{d-1}{n}\right\rceil$. Any permutation of $\A_d$ that acts nontrivially on any letter of $\A_d$ must act non-trivially on any string containing that letter.  We recall from group theory the following definition: $$\ptstab(S)=\{\sigma \in \sym (\A_d^n) \mid \forall w \in S, \sigma (w) = w\}.$$  Thus if $\sigma\in \ptstab(S)$, then $\sigma$ must fix every letter contained in any string in $S$.  Thus $\sigma$ fixes $0,1,\ldots, d-1$ and therefore also $d$.  We can conclude that $\sigma$  is the identity in both $\sym(\A_d)$ and in $\aut(\B(d,n)$ and therefore $S$ is a determining set.  Thus $\Det(\B(d,n))\leq \left\lceil \frac{d-1}{n}\right\rceil$. \medskip

Further if $|S|<\left\lceil \frac{d-1}{n}\right\rceil$ then fewer than $d-1$ letters of $\A_d$ are used in strings in $S$.  If $a,b\in A_d$ are not represented in $S$, then the transposition $(a \ b)$ in $\sym(A_d)$ is a non-trivial automorphism of $\B(d,n)$ that fixes $S$ pointwise.  Thus $S$ is not a determining set. \end{proof}

%If $d\leq n$ $|S|=1$ and we only need a vertex say $012\ldots d01\cdots$. If $d>n$, let $d-1=qn+r$ where $0\leq r<n$ then $S = \{ 012\cdots (n-1), n\cdots(2n-1), \ldots, (qn -n)\cdots (qn-1), qn\cdots (qn+r)01\cdots (n-r-1)\}$ is such a set.  Note that $|S|=\left\lceil \frac{d-1}{n} \right\rceil$.  Let $\sigma\in \ptstab(S)$.  Any permutation of $\A_d$ that acts nontrivially on any letter must act non-trivially on any string containing that letter.  Thus $\sigma$ must fix every letter contained in any string in $S$.  Thus$\sigma$ fixes $0,1,\ldots, d-1$ and therefore also $d$.  Thus $\sigma$  is the identity in both $\sym(\A_d)$ and in $\aut(\B(d,n)$.  Thus  $S$ is a determining set.\end{proof}

Thus for directed de~Bruijn graphs, the determining number and the directed metric dimension can be vastly different in size.

\section{Future Work}

There are several directions that future work in this research area could take.  The first is to continue the research on identifying codes in directed de~Bruijn graphs.  One key result missing from this paper is the determination of the size of a minimum $t$-identifying code in the graph $\vec{\mathcal{B}}(d,n)$ when $t=2n-1$.  It would be ideal to determine both a formula for which graphs $\vec{\mathcal{B}}(d,n)$ are $t$-identifiable for a given $t$, as well as constructions when it is known that such an identifying code exists.

An alternative direction for future research is to consider these same vertex subsets (identifying codes, dominating sets, resolving sets, and determining sets) on the undirected de~Bruijn graph.  Little work has been done and even foundational results like the size of a minimum dominating set are currently missing.  Basic Matlab programs have shown that many more undirected de~Bruijn graphs are $t$-identifiable than directed, and also that the minimum size of an identifying code is much smaller.  For example, through brute force testing we have determined that the minimum size of a 1-identifying code in $\mathcal{B}(2,5)$ is 12, whereas in the directed graph $\vec{\mathcal{B}}(2,5)$ we have shown that the minimum size is 16.

Finally, variations on the concept of identifying code would be useful for real-world applications.  For example, one type of variation known as a $k$-robust identifying code allows for up to $k$ sensor (identifying code vertex) failures without disruption of the identifying code properties.

\bibliographystyle{plain}
\bibliography{DirIDCodes}

\end{document}